%% file: main.tex
\documentclass[ijoc,sglanonrev]{informs4}
\usepackage{eqndefns-left} \RequirePackage{tgtermes}
\RequirePackage{newtxtext}
\RequirePackage{newtxmath}
\RequirePackage{bm}
\RequirePackage{endnotes}

\OneAndAHalfSpacedXII 
\usepackage{algorithm}
\usepackage{algpseudocode}
\usepackage{tikz}
 \usepackage{booktabs}
 \usepackage{booktabs}
\usepackage{longtable}
\renewenvironment{proof}[1][Proof]
{\par\noindent\textit{#1.}\ }
{\hfill$\square$\par}

\usepackage{natbib}
 \bibpunct[, ]{(}{)}{,}{a}{}{,}\def\bibfont{\small}\usepackage{siunitx}

\usepackage{multirow}
\usepackage{tabularx}
\usepackage{overpic}
\usepackage[caption=false]{subfig}
\usepackage{comment}

\usepackage{subcaption}
\usepackage[justification=centering]{caption}

\EquationsNumberedThrough    
\TheoremsNumberedThrough     \ECRepeatTheorems  
\MANUSCRIPTNO{IJOC-0001-2024.00}

\makeatletter

\renewcommand{\theARTICLETOP}{}

\LRHSecondLine{}
\RRHSecondLine{}

\renewcommand{\theARTICLEABSTRACT}{\par
  \vspace{12pt}\noindent
  \begin{minipage}{\textwidth}
    \parindent1em
    \ABSfont
    \noindent\theABSTRACT\par
    \vskip5pt
    \theFUNDING
    \theKEYWORDS
    \theSUBJECTCLASS
    \theAREAOFREVIEW
    \theMSCCLASS
    \theORMSCLASS
    \par\noindent\hrulefill
  \end{minipage}\par
}

\makeatother

\begin{document}

\RUNAUTHOR{Constante Flores et al.}

\RUNTITLE{Self-Supervised Tuning of Low-Fidelity Optimization Models}

\TITLE{Self-Supervised Tuning of Low-Fidelity Optimization Models via High-Fidelity Feedback}

\ARTICLEAUTHORS{\AUTHOR{Gonzalo E. {Constante Flores}}
\AFF{Department of Electrical, Computer \& Energy Engineering,
University of Colorado Boulder, \EMAIL{gonzalo.constante@colorado.edu}}

\AUTHOR{André Quisaguano}
\AFF{Department of Electrical, Computer \& Energy Engineering,
University of Colorado Boulder, \EMAIL{andre.quisaguanoparedes@colorado.edu}}
\AUTHOR{Asha Ramanujam}
\AFF{Davidson School of Chemical Engineering,
Purdue University, \EMAIL{aramanuj@purdue.edu}}
\AUTHOR{Rishabh Gupta}
\AFF{ExxonMobil Technology and Engineering Company, \EMAIL{rishabh.gupta1@exxonmobil.com}}
\AUTHOR{Can Li}
\AFF{Davidson School of Chemical Engineering,
Purdue University, \EMAIL{canli@purdue.edu}}
} 
\ABSTRACT{Low-fidelity (LF) optimization models are widely used in large-scale operations because they can be solved efficiently and reliably, but they often exhibit systematic errors relative to high-fidelity (HF) formulations that better capture physical and operational constraints. Existing learning-based approaches typically rely on supervised or semi-supervised training with HF solutions, which are expensive to compute and may not align with HF operational objectives. We propose a self-supervised framework that parameterizes LF optimization models directly from HF feedback. The low-fidelity problem is embedded as a differentiable optimization layer and trained end-to-end, while a HF correction problem evaluates its solution and provides gradient information through its dual variables, without requiring HF solution labels. We evaluate the framework on optimal power flow, optimal gas flow, and pooling problems, spanning different sources of low- and high-fidelity mismatch. Across all three applications, the self-supervised approach improves objective quality and feasibility over untuned LF models and generally outperforms semi-supervised training. The learned parameterizations also generalize to unseen operating conditions and, for network applications with up to 30,000 nodes, to systems not encountered during training.
}

\KEYWORDS{Machine learning for optimization, differentiable optimization} 

\maketitle

\section{Introduction}\label{sec:Intro}

High-fidelity (HF) optimization models are essential for decision-making in many engineering applications because they accurately capture complex physical, economic, and operational constraints. However, their repeated solution is often computationally prohibitive due to large-scale nonconvex formulations. As a result, practitioners commonly rely on low-fidelity (LF) models that trade modeling accuracy for computational efficiency through approximations, relaxations, or simplified physics. A representative example is power systems, where the alternating current optimal power flow (AC-OPF) model \citep{frank2016introduction} accurately captures nonlinear power flow equations, voltage magnitudes, reactive power, and transmission losses but is computationally expensive. In contrast, the direct current optimal power flow (DC-OPF) model simplifies these equations by assuming fixed voltage magnitudes, negligible line losses, and small voltage angle differences, yielding a much more scalable model at the expense of physical accuracy.

Although LF models are computationally attractive, they often exhibit systematic mismatches relative to their HF counterparts, which may arise from linearizations, omitted constraints, surrogate objective functions, or simplified physical assumptions, and can lead to suboptimal or even infeasible decisions. Previous work by \citet{Constante-Flores2025,Gupta2024} showed that the parameters of a nominal LF model can be systematically calibrated to better reproduce HF system behavior while preserving the LF model's physical interpretability and computational efficiency.

Figure~\ref{fig:overview} illustrates the geometric intuition behind this idea. The shaded regions represent the feasible sets of the HF and LF models, where \(x^\star\) denotes the HF-optimal solution and \(\hat{x}\) denotes the solution obtained from the nominal LF model. Because the LF model relies on simplified physics and approximations, its solution may deviate substantially from \(x^\star\). Rather than replacing the LF model with an unstructured surrogate, the calibration approach adjusts a set of physically meaningful LF model parameters, effectively shifting or reshaping the solution geometry so that the resulting LF decisions better approximate those of the HF model. However, existing approaches require repeatedly solving computationally expensive bilevel optimization problems, with the LF optimization problem embedded in the lower level, limiting their scalability to large-scale problems.

\paragraph{Contributions} To overcome this limitation, we propose a scalable self-supervised learning framework for tuning LF optimization models using HF operational feedback. Specifically, the LF optimization problem is embedded as a differentiable optimization layer whose parameters are trained end-to-end to minimize the HF objective induced by the LF decisions. Instead of requiring HF-optimal solutions as supervision, the HF model acts as an evaluation layer that computes the operational adjustment needed to satisfy the HF constraints and returns the resulting objective value together with sensitivity information for gradient-based training. As a result, the framework directly optimizes LF model parameters with respect to HF performance while avoiding the costly bilevel optimization procedures required by previous approaches. We further characterize the structural approximation capacity of the parameterized LF model through its reachable decision set and local solution sensitivities, which clarifies when matching an HF reference decision and minimizing HF operational loss can lead to different LF decisions.

\begin{figure}
    \centering
    \includegraphics[width=0.5\linewidth]{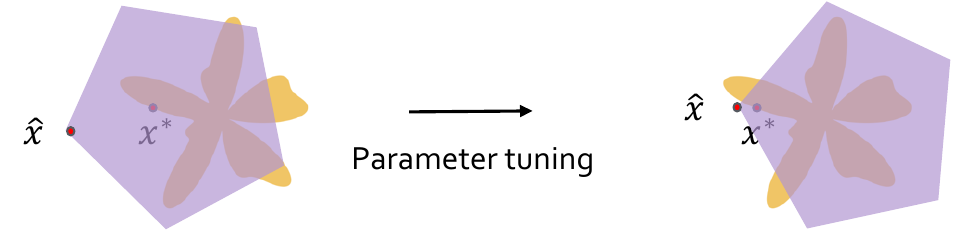}
    \caption{Geometric intuition of tuning low fidelity surrogate to better approximate a high-fidelity model.}
    \label{fig:overview}
\end{figure}

We evaluate the framework on three classes of optimization problems that represent qualitatively different sources of LF--HF mismatch: (i) DC approximations of nonlinear AC power-flow physics, (ii) relaxations of nonlinear gas-flow equations, and (iii) McCormick-based approximations of bilinear constraints in pooling problems. This diversity allows us to evaluate whether the proposed learning principle extends beyond a particular application or approximation structure. Across all three applications, the proposed framework consistently outperforms untuned LF models and semi-supervised baselines in terms of HF objective value, feasibility, computational scalability, and generalization to unseen operating conditions. The numerical experiments demonstrate that self-supervised learning driven by HF optimization feedback provides a principled and scalable framework for improving tractable surrogate optimization models for large-scale decision-making.

\paragraph{Related Work}
This paper lies at the intersection of contextual optimization, decision-focused learning (DFL), and optimization-based surrogate modeling. Predictive-to-prescriptive methods use context to learn decisions or problem data that minimize downstream cost rather than prediction error \citep{Bertsimas2020,Elmachtoub2022}. DFL further aligns training with downstream decision quality through implicit sensitivities, convex surrogates, continuous relaxations, perturbation-based gradients, black-box differentiation, and related loss constructions \citep{Donti2017,Wilder2019,Berthet2020,Vlastelica2020,Mulamba2021,Mandi2022,HoNguyenKilincKarzan2022,Kotary2021,Mandi2024a,Mandi2026}. Our formulation follows this task-alignment principle but targets a different source of misspecification. Rather than estimating unknown data for a prescribed optimization model, we learn context-dependent corrections to a structured LF model and evaluate its decisions with a distinct HF model. The framework therefore addresses model-form error across fidelity levels rather than only statistical error in problem data.

Differentiable optimization layers enable end-to-end training through optimization problems. OptNet differentiates quadratic programs through their KKT systems \citep{Amos2017}, while disciplined-parameterized and conic formulations extend this capability to broader classes of convex programs \citep{Agrawal2019}. More general frameworks support solver-agnostic implicit differentiation \citep{Blondel2022} and derivatives with respect to model parameters in algebraic modeling systems \citep{Besancon2024}. We use these tools to differentiate the LF solution map and propagate HF operational sensitivities back to the parameters of the LF model.

The closest work learns or calibrates structured optimization models rather than replacing them with direct decision predictors. \citet{Agrawal2021} introduce convex optimization models whose parameters are learned from labeled data by differentiating through the solution map. \citet{Ferber2023} propose SurCo, which learns the objective coefficients of a linear combinatorial surrogate end-to-end so that its solution minimizes a differentiable nonlinear objective, without requiring optimal-solution labels. Unlike our setting, however, SurCo retains the original feasible region and focuses on replacing a nonlinear objective with a learned linear surrogate, assuming direct access to the nonlinear objective and its gradient. \citet{Gupta2024} learn simpler optimization models from HF-optimal decisions through bilevel inverse optimization, while \citet{Constante-Flores2025} calibrate and then learn parameters of a DC-OPF approximation using AC-OPF information in a supervised setting. Related work in power systems has also sought to improve tractable OPF approximations. \citet{TaheriMolzahn2026} optimize parameters of the DC-OPF offline over a collection of operating conditions, yielding a single calibrated approximation used across instances. Our framework extends beyond power flow, allows selected LF parameters and contextual inputs to be tuned, and replaces offline label generation or direct objective gradients with end-to-end feedback from a distinct HF optimization problem.

\section{Proposed Framework}\label{sec:Framework}
We now formalize the proposed framework. We begin by presenting generic formulations of the HF and LF optimization models, followed by supervised, semi-supervised, and self-supervised learning approaches for tuning the LF model. We conclude the section with a qualitative discussion explaining why self-supervised learning is particularly well suited to this problem.
\subsection{Problem Setup}

\paragraph{Low-fidelity optimization model.}
The low-fidelity model provides a nominal decision policy based on a simplified or approximate representation of the underlying system. Let $x_{\mathrm{lf}} \in \mathbb{R}^{n_{\mathrm{lf}}}$ denote the LF decision variables, let $\rho_{\mathrm{lf}}\in \mathbb{R}^{d_{\rho}}$ denote the fixed parameters of the LF model, and let $\xi \in \mathbb{R}^{k}$ denote the changing operating environment, such as the changing demand in an optimal power flow problem. We assume that
$
    \xi \sim \mathbb{P}_{\xi},
$
and that i.i.d. samples can be generated from $\mathbb{P}_{\xi}$.

For a given realization of $\xi$, the nominal LF decision is obtained by solving
\begin{equation}
\label{eq:lf_nominal}
x_{\mathrm{lf}}(\xi)
\in
\argmin_{x_{\mathrm{lf}}}
\left\{
f(x_{\mathrm{lf}};\rho_{\mathrm{lf}},\xi)
\;\middle|\;
g(x_{\mathrm{lf}};\rho_{\mathrm{lf}},\xi)\leq 0,\;
h(x_{\mathrm{lf}};\rho_{\mathrm{lf}},\xi)=0
\right\}.
\end{equation}

Here, $f$, $g$, and $h$ denote the objective, inequality-constraint, and equality-constraint functions of the LF model, respectively. We assume that $f$ and each component of $g$ are convex in $x_{\mathrm{lf}}$, while $h$ is affine in $x_{\mathrm{lf}}$. The parameter vector $\rho_{\mathrm{lf}}$ may contain fixed quantities such as network configurations, physical properties, or coefficients introduced through modeling approximations that do not change with operating conditions.

\paragraph{High-fidelity optimization model.}
The high-fidelity model provides a more detailed representation of the physical or operational behavior of the system. Let $x_{\mathrm{hf}} \in \mathbb{R}^{n_{\mathrm{hf}}}$ denote the HF decision variables. For a given $\xi$, the nominal HF problem is
\begin{equation}
\label{eq:hf_nominal}
x_{\mathrm{hf}}(\xi)
\in
\argmin_{x_{\mathrm{hf}}}
\left\{
F(x_{\mathrm{hf}};\rho_{\mathrm{hf}},\xi)
\;\middle|\;
G(x_{\mathrm{hf}};\rho_{\mathrm{hf}},\xi)\leq 0,\;
H(x_{\mathrm{hf}};\rho_{\mathrm{hf}},\xi)=0
\right\}.
\end{equation}
The parameter vector $\rho_{\mathrm{hf}}$ contains the fixed parameters of the HF model. We assume that $F$, $G$, and $H$ are continuously differentiable with respect to $x_{\mathrm{hf}}$, but they may be nonconvex.

\paragraph{Relationship between the LF and HF decisions.}
The LF and HF models need not have identical decision spaces. We assume, however, that they contain a set of semantically compatible decision components. Let
$$
S_{\mathrm{lf}}
\in \{0,1\}^{q\times n_{\mathrm{lf}}}
\quad\text{and}\quad
S_{\mathrm{hf}}
\in \{0,1\}^{q\times n_{\mathrm{hf}}}
$$
denote selection matrices that extract these shared components from the LF and HF decision vectors, respectively. $S_{\mathrm{lf}}x_{\mathrm{lf}}$ and $S_{\mathrm{hf}}x_{\mathrm{hf}}$ represent the variables with the same dimension and semantic meaning.  The LF and HF feasible sets need not coincide when projected onto the shared decision components. An LF solution therefore defines only a partial decision for the HF evaluation problem, where it is imposed or coupled to the corresponding HF variables. Slack variables in the HF evaluation problem quantify any resulting incompatibility with the HF model.

Given a realization of $\xi$ and the associated LF decision $x_{\mathrm{lf}}$, we evaluate the LF decision using the following slack-augmented HF problem:
\begin{subequations}\label{eq:hf_problem_eval}
\begin{align}
    \bigl(
        x_{\mathrm{hf}}^\star(\xi,x_{\mathrm{lf}}),
        \delta_+^\star(\xi,x_{\mathrm{lf}}),
        \delta_-^\star(\xi,x_{\mathrm{lf}})
    \bigr)
    \in
    \argmin_{x_{\mathrm{hf}},\,\delta_+,\,\delta_-}
    \quad &
    F\bigl(x_{\mathrm{hf}};\rho_{\mathrm{hf}},\xi\bigr)
    +\Psi(\delta_+,\delta_-)
    \label{eq:hf_problem_eval_a}\\
    \text{s.t.}\quad
    &
    G\bigl(x_{\mathrm{hf}}, \delta_{-};\; x_{\mathrm{lf}}, \rho_{\mathrm{hf}},\xi\bigr)
    \leq 0,
    \label{eq:hf_problem_eval_b}\\
    &
    H\bigl(x_{\mathrm{hf}},\delta_+, \delta_-;\;x_{\mathrm{lf}},\rho_{\mathrm{hf}},\xi\bigr)
    = 0,
    \label{eq:hf_problem_eval_c}\\
    &
    \delta_+\geq 0,\qquad \delta_-\geq 0.
    \label{eq:hf_problem_eval_d}
\end{align}
\end{subequations}
Here, the dependence of $G$ and $H$ on $x_{\mathrm{lf}}$ represents the coupling between the LF decision and the HF evaluation problem. Only the shared components $S_{\mathrm{lf}}x_{\mathrm{lf}}$ are passed to the HF model. Additional constraints may be added to couple the HF and LF decisions with slack variables, i.e., $S_{\mathrm{lf}}x_{\mathrm{lf}} = S_{\mathrm{hf}}x_{\mathrm{hf}}+\delta_{+} - \delta_{-}$. With a slight abuse of notation, the dependence of $G$ and $H$ on $\delta_+$ and $\delta_-$ indicates that slack variables may be introduced into selected HF constraints, including constraints used to couple the LF and HF decisions. This general formulation allows the particular location and form of the feasibility corrections to depend on the application.

We assume that $\Psi:\mathbb{R}_+^{n+}\times\mathbb{R}_+^{n-}\rightarrow\mathbb{R}_+$ is convex, satisfies $\Psi(0,0)=0$, and is nondecreasing in the magnitude of the slack variables. For example, $\Psi(\delta_{+},\delta_{-})=\lambda_{+}^{\top}\delta_{+}+\lambda_{-}^{\top}\delta_{-}$ with $\lambda_{+},\lambda_{-}\ge 0$.

The slack-augmented $G$ and $H$ are assumed to be continuously differentiable with respect to $x_{\mathrm{hf}}$, $\delta_+$, and $\delta_-$. We define the HF evaluation loss $\ell_{\mathrm{hf}}(x_{\mathrm{lf}},\xi)$ as the optimal value of \eqref{eq:hf_problem_eval}. This loss accounts for both the operational performance achieved by the HF model and the corrections required to reconcile the LF and HF decisions.

\paragraph{Tuning the LF model.}
The proposed framework tunes selected parameters of the LF model so that the resulting LF decisions perform well when evaluated using \eqref{eq:hf_problem_eval}. Let
$
\phi_\theta:\mathbb{R}^k\rightarrow\mathbb{R}^p
$
be a parametric mapping with trainable parameters $\theta$. We define the tuned fixed and operating parameters as
\begin{equation} \label{eq:tuned_parameters}
    \rho_{\theta}^{\mathrm{lf}}(\xi) =
    \rho_{\mathrm{lf}}+A\phi_\theta(\xi),
    \quad \text{and} \quad
    \xi_{\theta}(\xi) =
    \xi+B\phi_\theta(\xi),
\end{equation}
where
$A\in\mathbb{R}^{d_\rho\times p}$ and $B\in\mathbb{R}^{k\times p}$ are fixed matrices of appropriate dimensions. The matrices $A$ and $B$ are typically sparse so that only a selected subset of the LF parameters are modified.

The selection of tunable parameters is application dependent and is generally guided by physical insight. Jointly learning which parameters should be tuned would lead to a best-subset selection problem and may substantially increase the computational cost. We therefore assume that the tunable components are specified in advance.

The resulting tuned LF problem is
\begin{equation}
\label{eq:lf_nominal_tuned}
x_{\mathrm{lf}}^\theta(\xi)
\in
\argmin_{x_{\mathrm{lf}}}
\left\{
f\!\left(
x_{\mathrm{lf}};
\rho_{\theta}^{\mathrm{lf}}(\xi),
\xi_{\theta}(\xi)
\right)
\;\middle|\;
g\!\left(
x_{\mathrm{lf}};
\rho_{\theta}^{\mathrm{lf}}(\xi),
\xi_{\theta}(\xi)
\right)\leq 0,\;
h\!\left(
x_{\mathrm{lf}};
\rho_{\theta}^{\mathrm{lf}}(\xi),
\xi_{\theta}(\xi)
\right)=0
\right\}.
\end{equation}

The notation $x_{\mathrm{lf}}^\theta(\xi)$ emphasizes that the LF decision depends on both $\xi$ and the learned parameters $\theta$.

\paragraph{Expected and empirical learning objectives.}
The expected HF loss induced by the tuned LF model is
\begin{equation}
    \mathcal{L}(\theta)
    :=
    \mathbb{E}_{\xi\sim\mathbb{P}_{\xi}}
    \left[
        \ell_{\mathrm{hf}}
        \bigl(x_{\mathrm{lf}}^\theta(\xi),\xi\bigr)
    \right].
    \label{eq:expected_loss}
\end{equation}

Suppose that we can draw samples
$
    \{\xi_i\}_{i=1}^n
    \overset{\mathrm{i.i.d.}}{\sim}
    \mathbb{P}_{\xi}.
$
The corresponding empirical risk minimization problem can be formulated as the following bilevel optimization problem:
\begin{subequations}\label{eq:empirical_bilevel}
\begin{align}
    \min_{\substack{
        \theta,\,
        \{x_{\mathrm{lf},i},x_{\mathrm{hf},i},
        \delta_{+,i},\delta_{-,i}\}_{i=1}^{n}
    }}
    \quad &
    \frac{1}{n}
    \sum_{i=1}^{n}
    \left[
        F\bigl(
            x_{\mathrm{hf},i};
            \rho_{\mathrm{hf}},\xi_i
        \bigr)
        +
        \Psi\bigl(\delta_{+,i},\delta_{-,i}\bigr)
    \right]
    \label{eq:empirical_bilevel_a}
    \\
    \text{s.t.}\quad
    &
    G\bigl(
        x_{\mathrm{hf},i},
        x_{\mathrm{lf},i},
        \delta_{-,i};
        \rho_{\mathrm{hf}},\xi_i
    \bigr)
    \leq 0,
    \qquad i=1,\ldots,n,
    \label{eq:empirical_bilevel_b}
    \\
    &
    H\bigl(
        x_{\mathrm{hf},i},
        x_{\mathrm{lf},i},
        \delta_{+,i},
        \delta_{-,i};
        \rho_{\mathrm{hf}},\xi_i
    \bigr)
    = 0,
    \qquad i=1,\ldots,n,
    \label{eq:empirical_bilevel_c}
    \\
    &
    \delta_{+,i}\geq 0,\qquad
    \delta_{-,i}\geq 0,
    \qquad i=1,\ldots,n,
    \label{eq:empirical_bilevel_d}
    \\
    &
    x_{\mathrm{lf},i}
    \in
    \argmin_{x_{\mathrm{lf}}}
    \left\{
    \begin{aligned}
        \quad &
        f\bigl(
            x_{\mathrm{lf}};
            \rho_{\theta}^{\mathrm{lf}}(\xi_i),
            \xi_{\theta}(\xi_i)
        \bigr)
        \\
        \text{s.t.}\quad &
        g\bigl(
            x_{\mathrm{lf}};
            \rho_{\theta}^{\mathrm{lf}}(\xi_i),
            \xi_{\theta}(\xi_i)
        \bigr)
        \leq 0,
        \\
        &
        h\bigl(
            x_{\mathrm{lf}};
            \rho_{\theta}^{\mathrm{lf}}(\xi_i),
            \xi_{\theta}(\xi_i)
        \bigr)
        = 0
    \end{aligned}
    \right\},
    \qquad i=1,\ldots,n.
    \label{eq:empirical_bilevel_e}
\end{align}
\end{subequations}
where the upper-level objective evaluates the decisions produced by the tuned LF model using the HF model. For fixed LF decisions, the variables $x_{\mathrm{hf},i}$, $\delta_{+,i}$, and $\delta_{-,i}$ minimize the HF objective and correction penalty subject to the HF constraints. The lower-level problems in \eqref{eq:empirical_bilevel_e} ensure that each $x_{\mathrm{lf},i}$ is an optimal solution of the corresponding tuned LF problem. Training the machine learning model $\phi_{\theta}$ by solving \eqref{eq:empirical_bilevel} directly is intractable. In subsections \ref{sec:supervised}-\ref{sec:self_supervised}, we present several learning algorithms for approximately solving the learning problem.

\subsection{Approximation Capacity of the Parameterized LF Model}
\label{subsec:lf_approximation_capacity}

We distinguish the expressive capacity of the parameterized LF model from the ability of a learning method to identify suitable parameter corrections. The former determines which decisions can be induced by modifying the selected LF parameters, and our analysis focuses on these structural limitations independently of the learning architecture or training procedure.

For a fixed operating condition $\xi$, let
$z\in\mathcal Z\subseteq\mathbb R^p$ denote an admissible LF parameter correction and consider $x_{\mathrm{lf}}(z,\xi)\in$
\begin{equation}\label{eq:lf_z_solution}\argmin_{x_{\mathrm{lf}}}\left\{f\bigl(x_{\mathrm{lf}};\rho_{\mathrm{lf}}+Az,\xi+Bz\bigr)\mid g\bigl(x_{\mathrm{lf}};\rho_{\mathrm{lf}}+Az,\xi+Bz\bigr)\leq0,\;h\bigl(x_{\mathrm{lf}};\rho_{\mathrm{lf}}+Az,\xi+Bz\bigr)=0\right\}. \end{equation}

The LF solution mapping need not be single-valued. We therefore assume that $x_{\mathrm{lf}}(z,\xi)$ denotes a locally unique solution over the parameter values of interest. Uniqueness holds, for example, under strong convexity, and can be enforced in linear or merely convex models through a small strongly convex regularization term \citep{Wilder2019}. More generally, strong regularity of the KKT system yields local uniqueness and Lipschitz continuity of the primal--dual solution mapping; LICQ together with a strong second-order sufficient condition provides standard sufficient conditions \citep{DontchevRockafellar2014}. Whenever derivatives are invoked, we additionally assume the regularity conditions required for local differentiability of the solution map.

Define the corresponding shared LF decision by
$
s_{\mathrm{lf}}(z,\xi)
:=
S_{\mathrm{lf}}x_{\mathrm{lf}}(z,\xi).
\label{eq:lf_shared_solution_map}
$
The set
$
\mathcal R_{\mathrm{lf}}(\xi)
:=
\left\{
s_{\mathrm{lf}}(z,\xi):
z\in\mathcal Z
\right\}
$
contains the shared decisions that can be induced as optimal solutions of the parameterized LF model. We refer to $\mathcal R_{\mathrm{lf}}(\xi)$ as the LF reachable decision set. This set can be substantially smaller than the set of feasible LF decisions. Indeed, the proposed framework does not directly learn a decision; it learns parameters of an optimization problem, and the resulting decision must therefore arise as an optimizer of the corresponding parameterized LF formulation. Consequently, $\mathcal R_{\mathrm{lf}}(\xi)$ captures an intrinsic limitation of the selected LF model family and parameterization.

Let $x_{\mathrm{hf}}^\star(\xi)$ denote a solution of the nominal HF problem and define its shared decision vector as
$
s_{\mathrm{hf}}^\star(\xi)
:=
S_{\mathrm{hf}}x_{\mathrm{hf}}^\star(\xi)
$.
For this HF reference solution, define the best decision-approximation error achievable by the parameterized LF model as
\begin{equation}
\varepsilon_{\mathrm{dec}}(\xi)
:=
\inf_{z\in\mathcal Z}
\left\|
s_{\mathrm{lf}}(z,\xi)
-
s_{\mathrm{hf}}^\star(\xi)
\right\|_2.
\label{eq:decision_approx_gap}
\end{equation}

The quantity $\varepsilon_{\mathrm{dec}}(\xi)$ characterizes the structural decision-approximation capacity of the chosen parameterized LF model. In particular, it provides an architecture-independent lower bound on the decision error achievable by any learned correction map. If
\(
\varepsilon_{\mathrm{dec}}(\xi)>0,
\)
then no correction map $\phi_\theta$, regardless of its expressive power or training procedure, can make the LF optimizer exactly reproduce the selected HF reference decision for that operating condition. The converse need not hold at the level of the learned policy. Even if $\varepsilon_{\mathrm{dec}}(\xi)=0$ for every operating condition individually, a particular function class used to represent $\phi_\theta$ may be unable to realize the corresponding context-dependent corrections simultaneously across all $\xi$. Thus, $\varepsilon_{\mathrm{dec}}(\xi)$ isolates limitations of the parameterized LF formulation from limitations of the learning architecture.

We next characterize locally how the selected LF parameters determine which shared decisions can be modified. Consider a nominal parameter correction
$
z_0\in\operatorname{int}(\mathcal Z),
$
typically $z_0=0$, and suppose that the shared LF solution map is differentiable at $z_0$. Define
$
J_z(\xi)
:=
\nabla_z s_{\mathrm{lf}}(z_0,\xi)
\in\mathbb R^{q\times p}.
$
Then
\begin{equation}
s_{\mathrm{lf}}(z_0+\Delta z,\xi)
=
s_{\mathrm{lf}}(z_0,\xi)
+
J_z(\xi)\Delta z
+
o\left(\lVert\Delta z\rVert_2\right).
\label{eq:lf_local_expansion}
\end{equation}

The matrix $J_z(\xi)$ describes how changes in the LF parameters modify the optimal shared decision.

\begin{proposition}[Locally attainable decision directions]
\label{prop:local_approx_capacity}
Suppose that $z_0\in\operatorname{int}(\mathcal Z)$ and that
$s_{\mathrm{lf}}(\cdot,\xi)$ is differentiable at $z_0$.
A decision-space direction $v\in\mathbb R^q$ can be generated to first order by an admissible infinitesimal perturbation of the LF parameters if and only if
\begin{equation}
v\in\operatorname{Range}\bigl(J_z(\xi)\bigr).
\label{eq:first_order_correctability}
\end{equation}
\end{proposition}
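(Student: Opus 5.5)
The first step is to make precise what ``generated to first order by an admissible infinitesimal perturbation'' means. The reading I will use: there exist a parameter direction $w\in\mathbb R^p$ and a scalar $\bar t>0$ such that $z_0+tw\in\mathcal Z$ for all $t\in[0,\bar t]$, and
\[
s_{\mathrm{lf}}(z_0+tw,\xi)=s_{\mathrm{lf}}(z_0,\xi)+t\,v+o(t)\quad\text{as } t\downarrow 0 .
\]
Equivalently, $v$ is the one-sided directional derivative of $s_{\mathrm{lf}}(\cdot,\xi)$ at $z_0$ along some admissible direction $w$. With this definition, the proof is a two-directional argument built on the expansion \eqref{eq:lf_local_expansion}.

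For sufficiency, suppose $v\in\operatorname{Range}(J_z(\xi))$. Choose $w$ with $J_z(\xi)w=v$. Since $z_0\in\operatorname{int}(\mathcal Z)$, there is a ball of radius $r>0$ around $z_0$ contained in $\mathcal Z$. Hence $z_0+tw\in\mathcal Z$ for every $0\le t\le r/\|w\|_2$; if $w=0$, then $v=0$ and any $t$ works. Substituting $\Delta z=tw$ into \eqref{eq:lf_local_expansion} gives
\[
s_{\mathrm{lf}}(z_0+tw,\xi)=s_{\mathrm{lf}}(z_0,\xi)+tJ_z(\xi)w+o(t\|w\|_2)=s_{\mathrm{lf}}(z_0,\xi)+tv+o(t),
\]
so $v$ is generated to first order.

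For necessity, suppose some admissible direction $w$ generates $v$. Differentiability at $z_0$ means \eqref{eq:lf_local_expansion} holds for every small $\Delta z$, and in particular along the ray $\Delta z=tw$. This yields
\[
\frac{s_{\mathrm{lf}}(z_0+tw,\xi)-s_{\mathrm{lf}}(z_0,\xi)}{t}\;\longrightarrow\; J_z(\xi)w \quad (t\downarrow 0).
\]
The same difference quotient also converges to $v$ by hypothesis. Limits are unique, so $v=J_z(\xi)w\in\operatorname{Range}(J_z(\xi))$.

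One could instead generate $v$ along curves $z(t)$ with $z(0)=z_0$ and $\dot z(0)=w$. The same argument covers this case: apply \eqref{eq:lf_local_expansion} with $\Delta z=z(t)-z_0=tw+o(t)$.

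The main obstacle is definitional rather than computational. We must fix what first-order generation means and confirm that the assumed Fr\'echet differentiability, which the paper secures through the regularity conditions that make the solution map locally single-valued, does the work. The interior assumption is used only in the sufficiency direction, to guarantee that every $w$ is admissible. Without it, the attainable set would shrink to the image under $J_z(\xi)$ of the tangent cone of $\mathcal Z$ at $z_0$. Noting this at the end would explain why $z_0\in\operatorname{int}(\mathcal Z)$ is assumed.
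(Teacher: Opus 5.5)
Your proof is correct and follows essentially the same route as the paper: interiority of $z_0$ makes every direction $w$ admissible, and the first-order expansion along the ray $z_0+tw$ shows that the generated decision variations are exactly the vectors $J_z(\xi)w$. Your version is somewhat more careful than the paper's, since you state explicitly what first-order generation means and close the necessity direction by uniqueness of the limit of the difference quotient.
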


\begin{proof}
For any parameter-space direction $u\in\mathbb R^p$, the interiority of $z_0$ implies that $z_0+tu\in\mathcal Z$ for all sufficiently small $|t|$. Differentiability gives
$$
s_{\mathrm{lf}}(z_0+tu,\xi)
=
s_{\mathrm{lf}}(z_0,\xi)
+
tJ_z(\xi)u
+
o(t).
$$

Hence, every first-order decision variation generated by an admissible infinitesimal perturbation has the form $J_z(\xi)u$ and therefore belongs to $\operatorname{Range}(J_z(\xi))$. Conversely, every vector $v\in\operatorname{Range}(J_z(\xi))$ can be written as $v=J_z(\xi)u$ for some $u$, and is therefore generated to first order by the perturbation $tu$.
\end{proof}

This result provides a local characterization of the expressive capacity of the selected LF parameterization. In particular, consider the discrepancy between the nominal LF and HF decisions,
\begin{equation}
d(\xi)
:=
s_{\mathrm{hf}}^\star(\xi)
-
s_{\mathrm{lf}}(z_0,\xi).
\label{eq:nominal_lf_hf_discrepancy}
\end{equation}
Define the linearized approximation residual
\begin{equation}
r_{\mathrm{lin}}(\xi) :=
\min_{\Delta z}
\lVert
d(\xi)-J_z(\xi)\Delta z
\rVert_2 =
\lVert
\left(
I-J_z(\xi)J_z(\xi)^\dagger
\right)d(\xi)
\rVert_2,
\label{eq:linearized_approx_residual}
\end{equation}
where $J_z(\xi)^\dagger$ denotes the Moore--Penrose pseudoinverse.

The quantity $r_{\mathrm{lin}}(\xi)$ is a local diagnostic of how well the nominal LF--HF discrepancy aligns with the first-order decision directions generated by the selected LF parameters. It should not be interpreted as the exact approximation error $\varepsilon_{\mathrm{dec}}(\xi)$, since the parameter perturbation required by the linearized model may be too large for the first-order approximation to remain accurate, and nonlinear changes in the LF solution map may generate additional decision variations away from $z_0$. The matrices $A$ and $B$ affect this local approximation capacity through $J_z(\xi)$. In particular, if the shared LF solution map is viewed as a function of the LF parameters and operating inputs, then, whenever the corresponding derivatives exist,
\begin{equation}
J_z(\xi)
=
D_{\rho}s_{\mathrm{lf}}\,A
+
D_{\xi}s_{\mathrm{lf}}\,B,
\label{eq:lf_parameter_jacobian_decomposition}
\end{equation}
where the derivatives are evaluated at $\bigl(\rho_{\mathrm{lf}}+Az_0,\xi+Bz_0\bigr)$. Thus, selecting which LF quantities are tunable determines not only the dimension of the learned correction, but also which directions of the optimal LF decision can be modified locally. The connection between decision approximation and HF evaluation can be established without requiring the projected feasible regions of the LF and HF models to coincide. Let $\mathcal V(s,\xi)$ denote the optimal value of the slack-augmented HF evaluation problem when the shared LF decision is fixed at $s$.

The Lipschitz assumption below is a standard stability property of parametric nonlinear programs. For continuously differentiable problem data, local Lipschitz continuity of the optimal value function follows under standard constraint qualifications and stability conditions, including local uniform compactness of the feasible set and MFCQ at the relevant optimal solutions \citep{GauvinDubeau1982,BonnansShapiro2000}. Stronger regularity of the KKT system also yields locally Lipschitz primal--dual solution mappings \citep{DontchevRockafellar2014}. Because Since Proposition~\ref{prop:decision_to_operational} requires Lipschitz continuity over the relevant LF-reachable set, we impose it directly instead of relying on stronger NLP regularity conditions.

\begin{proposition}[Decision and evaluation approximation]
\label{prop:decision_to_operational}
Suppose that, for a fixed $\xi$, $\mathcal V(\cdot,\xi)$ is
$L_{\mathcal V}(\xi)$-Lipschitz over a set containing
$s_{\mathrm{hf}}^\star(\xi)$ and $\mathcal R_{\mathrm{lf}}(\xi)$. 
Then
\begin{equation}
\inf_{z\in\mathcal Z}
\left|
\mathcal V\bigl(s_{\mathrm{lf}}(z,\xi),\xi\bigr)
-
\mathcal V\bigl(s_{\mathrm{hf}}^\star(\xi),\xi\bigr)
\right|
\leq
L_{\mathcal V}(\xi)
\varepsilon_{\mathrm{dec}}(\xi).
\label{eq:evaluation_approx_bound}
\end{equation}
\end{proposition}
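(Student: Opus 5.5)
The plan is to argue pointwise in $z$ and then pass to the infimum on both sides. Fix $\xi$ and an arbitrary admissible correction $z\in\mathcal Z$. By definition, $s_{\mathrm{lf}}(z,\xi)\in\mathcal R_{\mathrm{lf}}(\xi)$. The Lipschitz hypothesis holds on a set containing both $s_{\mathrm{hf}}^\star(\xi)$ and $\mathcal R_{\mathrm{lf}}(\xi)$, so it applies to this pair of points and gives
\begin{equation*}
\left|
\mathcal V\bigl(s_{\mathrm{lf}}(z,\xi),\xi\bigr)
-
\mathcal V\bigl(s_{\mathrm{hf}}^\star(\xi),\xi\bigr)
\right|
\leq
L_{\mathcal V}(\xi)
\left\|
s_{\mathrm{lf}}(z,\xi)-s_{\mathrm{hf}}^\star(\xi)
\right\|_2 .
\end{equation*}
Here I take the Lipschitz property to be with respect to the Euclidean norm, matching the norm used in \eqref{eq:decision_approx_gap}. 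If another norm were intended, a norm-equivalence constant would enter; I will state this choice explicitly.

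Next I take the infimum over $z\in\mathcal Z$ on both sides. The left-hand side of \eqref{eq:evaluation_approx_bound} is an infimum of the left-hand terms. Since the pointwise inequality holds for every $z$, that infimum is at most the infimum of the right-hand terms. Because $L_{\mathcal V}(\xi)\ge 0$ is a constant independent of $z$, it factors out of the infimum, and what remains is $\inf_{z}\|s_{\mathrm{lf}}(z,\xi)-s_{\mathrm{hf}}^\star(\xi)\|_2=\varepsilon_{\mathrm{dec}}(\xi)$. This yields \eqref{eq:evaluation_approx_bound}.

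To make the step "inf of LHS $\le$ inf of RHS" clean without assuming the infima are attained, I would use a minimizing sequence. Take $z_k\in\mathcal Z$ with $\|s_{\mathrm{lf}}(z_k,\xi)-s_{\mathrm{hf}}^\star(\xi)\|_2\to\varepsilon_{\mathrm{dec}}(\xi)$. Then the left-hand infimum is at most $L_{\mathcal V}(\xi)\|s_{\mathrm{lf}}(z_k,\xi)-s_{\mathrm{hf}}^\star(\xi)\|_2$ for every $k$, and letting $k\to\infty$ gives the bound.

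There is no substantive obstacle; the only delicate point is well-posedness. $\mathcal V$ must be finite at the points involved, which the Lipschitz assumption implicitly guarantees. $s_{\mathrm{lf}}(z,\xi)$ must be a well-defined single value, which follows from the local uniqueness assumption stated before \eqref{eq:lf_z_solution}.
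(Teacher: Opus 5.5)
Your proposal is correct and follows essentially the same route as the paper: apply the Lipschitz bound pointwise for each $z\in\mathcal Z$ (valid because $s_{\mathrm{lf}}(z,\xi)\in\mathcal R_{\mathrm{lf}}(\xi)$), then take the infimum over $z$ to recover $L_{\mathcal V}(\xi)\,\varepsilon_{\mathrm{dec}}(\xi)$. Your minimizing-sequence remark and the comments on norm choice and well-posedness are harmless refinements of the same argument.
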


\begin{proof}
For every $z\in\mathcal Z$, Lipschitz continuity gives
$$
\left|
\mathcal V\bigl(s_{\mathrm{lf}}(z,\xi),\xi\bigr)
-
\mathcal V\bigl(s_{\mathrm{hf}}^\star(\xi),\xi\bigr)
\right|
\leq
L_{\mathcal V}(\xi)
\left\|
s_{\mathrm{lf}}(z,\xi)
-
s_{\mathrm{hf}}^\star(\xi)
\right\|_2.
$$

Taking the infimum over $z\in\mathcal Z$ yields
\eqref{eq:evaluation_approx_bound}.
\end{proof}

Proposition~\ref{prop:decision_to_operational} shows that a small decision approximation error implies a small HF evaluation gap under the Lipschitz condition. The reverse is not guaranteed, since a different reachable decision may achieve similar or better HF performance. Thus, $\varepsilon_{\mathrm{dec}}(\xi)$ measures how well the LF model can reproduce the HF reference decision, not the best HF performance attainable over $\mathcal R_{\mathrm{lf}}(\xi)$. Moreover, $\mathcal V(s_{\mathrm{hf}}^\star(\xi),\xi)$ need not equal the nominal HF optimum unless additional conditions hold on the coupling constraints and slack penalties.

Together, $\mathcal R_{\mathrm{lf}}(\xi)$, $\varepsilon_{\mathrm{dec}}(\xi)$, $J_z(\xi)$, and $r_{\mathrm{lin}}(\xi)$ characterize the structural approximation capacity of the parameterized LF model: its reachable decisions, global approximation error, locally attainable directions, and alignment of the LF--HF discrepancy with those directions. These limitations are independent of the learning architecture: a more expressive correction map cannot overcome restrictions of the LF reachable set, while an insufficiently expressive map may fail to exploit its available capacity across operating conditions.

\subsection{Supervised Learning}\label{sec:supervised}
A supervised learning-based approach was proposed by \cite{Constante-Flores2025} for AC-OPF and DC-OPF applications. To make this paper self-contained, we summarize the approach using the general HF-LF notations introduced.
\paragraph{Generation of HF reference decisions.}
Given the $\{\xi_i\}_{i=1}^{n}$, we first solve the nominal HF problem for each sample. The resulting solution $x_{\mathrm{hf},i}^{\star}$ serves as the reference decision for $\xi_i$. 
\paragraph{Sample-wise generation of calibration labels.}
For each sample $i$, let $z_i\in\mathbb{R}^{p}$ denote a realization-specific correction to the LF model parameters. The ideal correction $z_i^\star$ is obtained by solving
\begin{subequations}\label{eq:samplewise_calibration}
\begin{align}
    \min_{z_i,\,x_{\mathrm{lf},i}}
    \quad &
    \left\|
       S_{\mathrm{lf}} x_{\mathrm{lf},i}
        -
        S_{\mathrm{hf}}x_{\mathrm{hf},i}^{\star}
    \right\|_2^2
    \label{eq:samplewise_calibration_a}\\
    \text{s.t.}\quad
    &
    x_{\mathrm{lf},i}
    \in
    \argmin_{x_{\mathrm{lf}}}
    \left\{
    \begin{aligned}
        \quad &
        f\bigl(
            x_{\mathrm{lf}};
            \rho_{\mathrm{lf}}+Az_i,
            \xi_i+Bz_i
        \bigr)
        \\
        \text{s.t.}\quad &
        g\bigl(
            x_{\mathrm{lf}};
            \rho_{\mathrm{lf}}+Az_i,
            \xi_i+Bz_i
        \bigr)
        \leq 0,
        \\
        &
        h\bigl(
            x_{\mathrm{lf}};
            \rho_{\mathrm{lf}}+Az_i,
            \xi_i+Bz_i
        \bigr)
        = 0
    \end{aligned}
    \right\}.
    \label{eq:samplewise_calibration_b}
\end{align}
\end{subequations}

We denote an optimal solution of \eqref{eq:samplewise_calibration} by $\bigl(z_i^\star,x_{\mathrm{lf},i}^\star\bigr)$. The objective in \eqref{eq:samplewise_calibration_a} encourages the tuned LF decision to reproduce the corresponding HF reference decision. Problem \eqref{eq:samplewise_calibration} is a sample-wise bilevel problem. Unlike \eqref{eq:empirical_bilevel}, however, the calibration problems are independent across samples and can therefore be solved in parallel. The solutions
$
    \mathcal{D}_{\mathrm{sup}}
    =
    \left\{
        \bigl(\xi_i,z_i^\star\bigr)
    \right\}_{i=1}^{n}
$ form a supervised-learning dataset in which the $\xi_i$ is the input and the ideal LF correction $z_i^\star$ is the target label.

\paragraph{Supervised training of the correction map.}
We train $\phi_\theta$ to approximate the mapping from a $\xi_i$ to its sample-specific correction $z_i^\star$. The supervised-learning problem is
\begin{equation}
    \min_{\theta\in\Theta}
    \quad
    \widehat{\mathcal{L}}_{\mathrm{sup}}(\theta)
    :=
    \frac{1}{n}
    \sum_{i=1}^{n}
    \left\|
        \phi_\theta(\xi_i)-z_i^\star
    \right\|_2^2
    \label{eq:supervised_training}
\end{equation}

This approach replaces the joint end-to-end optimization of
\eqref{eq:empirical_bilevel} with an offline label-generation procedure
followed by standard supervised learning. 

\subsection{Semi-Supervised Learning}
\label{sec:semi_supervised}

The supervised-learning approach requires solving the sample-wise calibration problem \eqref{eq:samplewise_calibration} to generate a correction label $z_i^\star$ for every training sample. Semi-supervised learning avoids this label-generation step and instead requires only the HF reference decisions $\{x_{\mathrm{hf},i}^{\star}\}_{i=1}^{n}$, obtained by solving the nominal HF problem for each training context. The correction map $\phi_\theta$ is then trained end-to-end by differentiating through the tuned LF optimization problem.

For each sample $\xi_i$, the network produces the correction $z_i=\phi_\theta(\xi_i)$, which determines the tuned LF parameters through \eqref{eq:tuned_parameters}. Solving the resulting tuned LF problem \eqref{eq:lf_nominal_tuned} gives the decision $x_{\mathrm{lf},i}^{\theta}$. The semi-supervised loss is
\begin{equation}
    \min_{\theta\in\Theta}
    \quad
    \widehat{\mathcal{L}}_{\mathrm{semi}}(\theta)
    :=
    \frac{1}{n}
    \sum_{i=1}^{n}
    \left\|
        S_{\mathrm{lf}}x_{\mathrm{lf},i}^{\theta}
        -
        S_{\mathrm{hf}}x_{\mathrm{hf},i}^{\star}
    \right\|_2^2.
    \label{eq:semi_objective}
\end{equation}
Thus, the network is trained to make the common components of the tuned LF
decision reproduce those of the corresponding HF-optimal decision, without
first constructing correction labels.

The computational graph for each sample is
\begin{equation}
    \xi_i
    \longrightarrow
    \phi_\theta(\xi_i)
    \longrightarrow
    \bigl(
        \rho_{\theta}^{\mathrm{lf}}(\xi_i),
        \xi_{\theta}(\xi_i)
    \bigr)
    \longrightarrow
    x_{\mathrm{lf},i}^{\theta}
    \longrightarrow
    \ell_{\mathrm{semi},i},
    \label{eq:semi_computational_graph}
\end{equation}
where 
\(
    \ell_{\mathrm{semi},i}
    :=
    \left\|
        S_{\mathrm{lf}}x_{\mathrm{lf},i}^{\theta}
        -
        S_{\mathrm{hf}}x_{\mathrm{hf},i}^{\star}
    \right\|_2^2.
\)

Applying the chain rule gives
\begin{equation}
    \nabla_\theta \ell_{\mathrm{semi},i}=
    \left(
        \frac{\partial x_{\mathrm{lf},i}^{\theta}}
             {\partial\theta}
    \right)^{\!\top}
    \nabla_{x_{\mathrm{lf}}}
    \ell_{\mathrm{semi},i},
    \quad
    \frac{\partial x_{\mathrm{lf},i}^{\theta}}
         {\partial\theta} =
    \frac{\partial x_{\mathrm{lf},i}^{\theta}}
         {\partial z_i}
    \frac{\partial\phi_\theta(\xi_i)}
         {\partial\theta}.
\label{eq:semi_chain_rule}
\end{equation}
The Jacobian
$\partial x_{\mathrm{lf},i}^{\theta}/\partial z_i$ is obtained by implicitly differentiating the optimality conditions of the tuned LF problem. In practice, a differentiable optimization layer computes the required vector--Jacobian product by solving an adjoint linear system associated with the LF optimality conditions \citep{Besancon2024}. The gradient is then propagated through $\phi_\theta$ using automatic differentiation.

\subsection{Self-Supervised Learning}
\label{sec:self_supervised}

The supervised and semi-supervised approaches require offline HF solutions, either to generate correction labels or to provide decision targets. Self-supervised learning eliminates this requirement by training the correction map directly using the HF evaluation loss $\ell_{\mathrm{hf}}$ defined by the slack-augmented HF problem \eqref{eq:hf_problem_eval}.

For each training sample $\xi_i$, the correction $z_i=\phi_\theta(\xi_i)$ determines the tuned LF parameters through \eqref{eq:tuned_parameters}, and solving \eqref{eq:lf_nominal_tuned} gives the decision $x_{\mathrm{lf}}^\theta(\xi_i)$. The self-supervised training problem is
\begin{equation}
    \min_{\theta\in\Theta}
    \quad
    \widehat{\mathcal{L}}_{\mathrm{self}}(\theta)
    :=
    \frac{1}{n}
    \sum_{i=1}^{n}
    \ell_{\mathrm{hf}}
    \bigl(
        x_{\mathrm{lf}}^\theta(\xi_i),
        \xi_i
    \bigr).
    \label{eq:self_training}
\end{equation}
Because $\ell_{\mathrm{hf}}$ is the optimal value of \eqref{eq:hf_problem_eval}, the training loss accounts for both the HF operational objective and the feasibility corrections required to reconcile the LF decision with the HF model.

Applying the chain rule to each term in \eqref{eq:self_training} gives
\begin{equation}
    \nabla_\theta
    \ell_{\mathrm{hf}}
    \bigl(
        x_{\mathrm{lf}}^\theta(\xi_i),
        \xi_i
    \bigr)
    =
    \left(
        \frac{\partial x_{\mathrm{lf}}^\theta(\xi_i)}
             {\partial\theta}
    \right)^{\!\top}
    \nabla_{x_{\mathrm{lf}}}
    \ell_{\mathrm{hf}}
    \bigl(
        x_{\mathrm{lf}}^\theta(\xi_i),
        \xi_i
    \bigr),
    \qquad
    \frac{\partial x_{\mathrm{lf}}^\theta(\xi_i)}
         {\partial\theta}
    =
    \frac{\partial x_{\mathrm{lf}}^\theta(\xi_i)}
         {\partial z_i}
    \frac{\partial\phi_\theta(\xi_i)}
         {\partial\theta}.
    \label{eq:self_chain_rule}
\end{equation}
The Jacobian
$\partial x_{\mathrm{lf}}^\theta(\xi_i)/\partial z_i$
is obtained by implicitly differentiating the optimality conditions of the tuned LF problem \citep{Besancon2024}. The gradient of the HF evaluation value with respect to the LF decision can be obtained from the optimal multipliers associated with the constraints through which the LF decision enters the HF evaluation problem, under standard regularity conditions for parametric optimization \citep{BonnansShapiro2000}. These sensitivities are propagated through the LF solution map and $\phi_\theta$, without differentiating through the iterations of the HF solver.

Self-supervised learning therefore directly minimizes the HF operational loss induced by the tuned LF model and is the closest of the three approaches to the empirical objective \eqref{eq:empirical_bilevel}. Its principal computational disadvantage is that both the LF and HF optimization problems must be solved during training.
\subsection{Interpretation of the Learning Objectives}
\label{subsec:operational_interpretation}

The semi-supervised and self-supervised formulations evaluate the quality of an LF decision using different criteria. For an operating condition $\xi$, let
\(
s^\theta(\xi)
:=
S_{\mathrm{lf}}x_{\mathrm{lf}}^\theta(\xi)
\)
denote the shared components of the decision produced by the tuned LF model, and let
\(
s_{\mathrm{hf}}^\star(\xi)
:=
S_{\mathrm{hf}}x_{\mathrm{hf}}^\star(\xi)
\)
denote the corresponding shared components of an HF-optimal decision. The semi-supervised formulation minimizes the Euclidean decision error
\begin{equation}
\left\|
s^\theta(\xi)-s_{\mathrm{hf}}^\star(\xi)
\right\|_2^2,
\label{eq:interp_semi_decision_loss}
\end{equation}
whereas the self-supervised learning evaluates the LF decision through the HF evaluation value,
\begin{equation}
\mathcal V\bigl(s^\theta(\xi),\xi\bigr).
\label{eq:interp_self_hf_value}
\end{equation}
Thus, semi-supervised learning seeks agreement with a particular HF decision, whereas self-supervised learning seeks good operational performance under the HF model.

This distinction becomes important when the parameterized LF model cannot reproduce the HF-optimal decision exactly. Recall from Section~\ref{subsec:lf_approximation_capacity} that
\(
\mathcal R_{\mathrm{lf}}(\xi)
=
\left\{
s_{\mathrm{lf}}(z,\xi):
z\in\mathcal Z
\right\}
\)
contains the shared decisions that can be induced as optimal solutions of the parameterized LF model. If
\(
s_{\mathrm{hf}}^\star(\xi)
\notin
\mathcal R_{\mathrm{lf}}(\xi),
\)
then no admissible LF parameter correction can exactly reproduce the HF decision.

Ignoring restrictions imposed by the particular function class used to represent $\phi_\theta$, the idealized semi-supervised problem for a fixed operating condition is
\begin{equation}
\min_{s\in\mathcal R_{\mathrm{lf}}(\xi)}
\left\|
s-s_{\mathrm{hf}}^\star(\xi)
\right\|_2^2.
\label{eq:interp_reachable_semi}
\end{equation}
In contrast, the corresponding idealized self-supervised problem is
\begin{equation}
\min_{s\in\mathcal R_{\mathrm{lf}}(\xi)}
\mathcal V(s,\xi).
\label{eq:interp_reachable_self}
\end{equation}
Therefore, the two approaches generally seek different decisions within the same LF-reachable set. Semi-supervised learning selects an attainable decision that is closest to the HF reference decision under the chosen decision-space metric, whereas self-supervised learning selects an attainable decision that minimizes the HF evaluation loss. Consequently, when the HF reference decision is not attainable, its closest LF approximation need not be the attainable decision with the best HF operational performance. This distinction also admits a local geometric interpretation. Consider a nominal parameter correction $z_0$ and define
\[
s_0(\xi)
:=
s_{\mathrm{lf}}(z_0,\xi),
\qquad
d(\xi)
:=
s_{\mathrm{hf}}^\star(\xi)-s_0(\xi).
\]
As established in Section~\ref{subsec:lf_approximation_capacity}, for a sufficiently small parameter perturbation $\Delta z$,
\begin{equation}
s_{\mathrm{lf}}(z_0+\Delta z,\xi)
=
s_0(\xi)
+
J_z(\xi)\Delta z
+
o\left(\lVert \Delta z\rVert_2\right).
\label{eq:interp_lf_local_expansion}
\end{equation}
Substituting this approximation into the semi-supervised loss yields, to first order,
\begin{equation}
\min_{\Delta z}
\left\|
J_z(\xi)\Delta z-d(\xi)
\right\|_2^2.
\label{eq:interp_local_semi}
\end{equation}
Hence, locally, semi-supervised learning seeks to reduce the component of the LF--HF decision discrepancy that can be modified through the selected LF parameters, with discrepancy measured in Euclidean distance.

Under additional regularity conditions, the self-supervised criterion admits a related local interpretation. Suppose, as a special case, that $\mathcal V(\cdot,\xi)$ is twice continuously differentiable in a neighborhood of $s_{\mathrm{hf}}^\star(\xi)$ and that $s_{\mathrm{hf}}^\star(\xi)$ is a local minimizer of $\mathcal V(\cdot,\xi)$. Then
\begin{equation}
\nabla_s
\mathcal V\bigl(s_{\mathrm{hf}}^\star(\xi),\xi\bigr)
=
0,
\label{eq:interp_hf_stationarity}
\end{equation}
and, for $s$ sufficiently close to $s_{\mathrm{hf}}^\star(\xi)$,
\begin{equation}
\mathcal V(s,\xi)
-
\mathcal V\bigl(s_{\mathrm{hf}}^\star(\xi),\xi\bigr)
=
\frac{1}{2}
\bigl(s-s_{\mathrm{hf}}^\star(\xi)\bigr)^\top
Q(\xi)
\bigl(s-s_{\mathrm{hf}}^\star(\xi)\bigr)
+
o\!\left(
\left\|
s-s_{\mathrm{hf}}^\star(\xi)
\right\|_2^2
\right),
\label{eq:interp_hf_second_order}
\end{equation}
where 
\(
Q(\xi)
:=
\nabla_{ss}^2
\mathcal V\bigl(s_{\mathrm{hf}}^\star(\xi),\xi\bigr)
\succeq 0.
\)
In the local regime where the LF decisions under consideration remain close to $s_{\mathrm{hf}}^\star(\xi)$, combining \eqref{eq:interp_lf_local_expansion} and \eqref{eq:interp_hf_second_order} gives the leading-order approximation
\begin{equation}
\min_{\Delta z}
\frac{1}{2}
\left(
J_z(\xi)\Delta z-d(\xi)
\right)^\top
Q(\xi)
\left(
J_z(\xi)\Delta z-d(\xi)
\right).
\label{eq:interp_local_self}
\end{equation}

Equations~\eqref{eq:interp_local_semi} and \eqref{eq:interp_local_self} provide a local geometric interpretation of the two objectives. The sensitivity matrix $J_z(\xi)$ determines the first-order attainable decision directions generated by the selected LF parameters, while the learning objective determines how discrepancies along those directions are evaluated. Semi-supervised learning uses Euclidean distance, whereas self-supervised learning uses a curvature-weighted quadratic form determined by $Q(\xi)$. The two criteria are locally aligned when $Q(\xi)$ acts approximately as a scalar multiple of the identity over the relevant decision directions; otherwise, they generally prioritize different components of the LF--HF discrepancy.

The curvature interpretation is not required for the broader distinction between the two objectives. Equations~\eqref{eq:interp_reachable_semi}--\eqref{eq:interp_reachable_self} already show that semi-supervised learning minimizes decision discrepancy over the LF-reachable set, whereas self-supervised learning directly minimizes HF evaluation over the same set. This is consistent with the decision-focused learning perspective that matching decisions does not necessarily optimize downstream performance \citep{Elmachtoub2022,HoNguyenKilincKarzan2022}. When the LF model cannot reproduce the HF-optimal decision exactly, self-supervised learning can therefore use the available LF flexibility to improve HF performance directly, although training may be more difficult because the HF evaluation value can be nonconvex, nonsmooth, locally flat, or sensitive to local solution branches. This helps explain the numerical results in Section~\ref{sec:Results}: when the HF reference decision is not LF-reachable, the closest reachable decision in Euclidean distance need not yield the best HF evaluation value.

\subsection{Special Considerations for Network Flow Problems}
\label{sec:network_flow_considerations}

For network flow problems, the correction map should account explicitly for the underlying graph structure. Rather than applying a standard multilayer perceptron to a flattened representation of the operating conditions, $\phi_\theta$ can be parameterized using a graph neural network (GNN) \citep{cappart2023combinatorial}. Nodes may represent network components such as buses, sources, or sinks, while edges represent physical connections and their associated capacities, costs, or other attributes.

A GNN constructs the correction through repeated local message-passing operations. Consequently, its  parameterization is naturally compatible with the locality and permutation symmetries of network flow models. The same learned parameters can also be applied across networks with different numbers of nodes and edges, provided that the node and edge features have consistent meanings. This structural inductive bias can improve sample efficiency and generalization to operating conditions or network topologies that are not observed during training. In the proposed framework, the GNN output can be used to generate node-wise, edge-wise, or global corrections to the LF model parameters. The remainder of the supervised, semi-, or self-supervised learning procedure remains unchanged; only the architecture used to represent $\phi_\theta$ is modified.

\section{Results and Discussion}\label{sec:Results}

This section presents the experimental results of the proposed self-supervised and semi-supervised frameworks for three optimization problems: optimal power flow (OPF), gas network optimization, and the pooling problem. All the CPU experiments are conducted on a single thread of a Linux server equipped with an AMD EPYC 7643 processor (48 cores, 96 threads and 1TB of RAM. Additionally, the GPU-based OPF framework is evaluated on an AMD Ryzen AI 9 HX 370 processor with 32~GB of RAM, and an NVIDIA GeForce RTX 5070 Ti GPU.

The experiments are implemented in Julia~v1.10.11. The GNNs are developed with \texttt{GraphNeuralNetworks.jl} (v0.6.23) and trained using \texttt{Flux.jl} (v0.14.21), while differentiable optimization layers used \texttt{DiffOpt.jl} (v0.5.6). GPU-accelerated OPF experiments are implemented with \texttt{ExaModels.jl} (v0.11.2). We implement the semi and self-supervised learning frameworks for all three applications. For both learning frameworks, a GNN is trained offline for each optimization problem and fidelity setting using an application-specific number of training samples: 1 sample for both the optimal power flow and optimal gas flow problems, and 25 samples for the pooling problem, then evaluated online on 1,000 unseen samples drawn from the training distribution. The code can be found at \url{https://github.com/li-group/HiFi_LoFi_learning}.

\paragraph{Evaluation metrics.}
We evaluate all LF models according to their performance when their resulting decisions are passed to the corresponding HF formulation. Two primary metrics are used across the three applications, and reported on unseen test instances.

First, \emph{objective error} measures the mismatch between the HF evaluation loss induced by the LF decision and the optimal objective value of the nominal HF model. For a test instance $\xi$, we define
\begin{equation}
e_{\mathrm{obj}}(\xi)
:=
100\cdot
\frac{
\left|
\ell_{\mathrm{hf}}
\bigl(x_{\mathrm{lf}}^\theta(\xi),\xi\bigr)
-
F_{\mathrm{hf}}^{\mathrm{ref}}(\xi)
\right|
}{
\left|F_{\mathrm{hf}}^{\mathrm{ref}}(\xi)\right|
}.
\label{eq:evaluation_objective_error}
\end{equation}
where $F_{\mathrm{hf}}^{\mathrm{ref}}(\xi)$ is the objective value of the reference solution obtained by solving the nominal HF problem. Since the HF models considered here may be nonconvex, this value should be interpreted as a high-quality reference value, e.g., the optimal objective value of a local solver, rather than a certified global optimum.

Second, \emph{feasibility violation} quantifies the magnitude of these corrections. Because slack variables have different meanings and units across applications, for each sample we compute the application-specific mean constraint violation from the corresponding HF evaluation problem. We also report the 90th percentile (P90) of this distribution across samples, capturing the magnitude of corrections for most samples while reducing sensitivity to extreme cases.

\subsection{Optimal Power Flow}
The OPF determines the optimal steady-state operating point of a power system by minimizing generation cost subject to the power flow equations and operational limits on generators, voltages, and network elements. In our setup, the AC-OPF serves as the HF model and is formulated as an NLP problem, whereas the LF model corresponds to the DC-OPF, formulated as a quadratic programming (QP) problem based on a linear approximation of the power flow equations that neglects active power losses. We use the active power demands in the DC-OPF as the tunable parameters, allowing the learned parameters to implicitly account for the losses neglected by the DC approximation. We consider power system test cases ranging from 500 to 30,000 buses, whose problem size in terms of variables and constraints are detailed in Table~\ref{tab:opf_problem_size} of Appendix~\ref{app:opf}.

We compare the original (untuned) DC-OPF, the parameterized DC-OPF (proposed), and the AC-OPF, which serves as the HF reference model. Test instances are generated by perturbing the nominal active power demand by $\pm30\%$, while the reactive power demand is varied between $0\%$ and $-15\%$. We evaluate the proposed GNN under both transductive and inductive learning settings.

\subsubsection{Results}
We use a GNN with node and branch features representing the power system operating condition; architecture and training details are provided in Table~{\ref{tab:gnn_architecture_opf}} Appendix~\ref{app:opf}. At test time, the GNN predicts the LF model parameters, the resulting DC-OPF is solved, and its active power dispatch is evaluated in the AC-OPF model with slack variables representing corrective actions needed to recover feasibility. Table~\ref{tab:opf_transductive} reports the relative objective error and P90 feasibility violation. The self-supervised approach performs best across all systems, achieving a mean relative objective error below $0.4\%$ and P90 feasibility violation below $0.5$~MW, while the semi-supervised approach yields less consistent improvements, particularly for larger systems.

\begin{table}[H]
\centering
\TABLE{HF evaluation of the transductive OPF experiments. Objective errors are reported as mean (maximum) relative error.
\label{tab:opf_transductive}}
{\begin{tabular}{l|cc|cc|cc}
\hline \up
\multirow{2}{*}{\textbf{Case}} &
\multicolumn{2}{c|}{\textbf{Self-Supervised}} &
\multicolumn{2}{c|}{\textbf{Semi-Supervised}} &
\multicolumn{2}{c}{\textbf{Untuned}} \\
\cline{2-7} \up
&
\begin{tabular}[c]{@{}c@{}}Error (\%)\\Mean (Max)\end{tabular} &
\begin{tabular}[c]{@{}c@{}}P90 Feas.\\(MW)\end{tabular} &
\begin{tabular}[c]{@{}c@{}}Error (\%)\\Mean (Max)\end{tabular} &
\begin{tabular}[c]{@{}c@{}}P90 Feas.\\(MW)\end{tabular} &
\begin{tabular}[c]{@{}c@{}}Error (\%)\\Mean (Max)\end{tabular} &
\begin{tabular}[c]{@{}c@{}}P90 Feas.\\(MW)\end{tabular} \\
\hline \up

1,354
& 0.314 (4.617) & 0.317
& 1.187 (14.351) & 2.430
& 3.311 (7.950) & 6.253 \\

2,000
& 0.153 (0.277) & 0.131
& 0.321 (0.464) & 0.397
& 2.474 (2.579) & 3.422 \\

4,661
& 0.276 (3.781) & 0.176
& 0.393 (3.955) & 0.380
& 1.736 (5.120) & 1.802 \\

13,659
& 0.293 (0.453) & 0.109
& 1.678 (1.826) & 1.789
& 2.087 (2.231) & 2.306 \\

30,000
& 0.320 (0.414) & 0.478
& 0.509 (0.635) & 0.540
& 4.404 (4.482) & 1.260 \down \\

\hline
\end{tabular}
}{}
\end{table}

Figure~\ref{fig:opf_evaluation} summarizes the cost and feasibility results across the OPF test systems. The cost correlations show that the GNN generalizes from a single training operating point to a broad range of unseen conditions. The self-supervised approach also performs more consistently than the semi-supervised approach, indicating that directly incorporating HF feedback during training is more effective than matching HF solution labels alone. The feasibility results show that the self-supervised approach consistently reduces violations and the corrective actions required by the HF model relative to the untuned model. These improvements are particularly pronounced for the larger systems, where the self-supervised approach also outperforms the semi-supervised model.

\begin{figure*}[htbp]
\centering

\begin{overpic}[width=\textwidth]{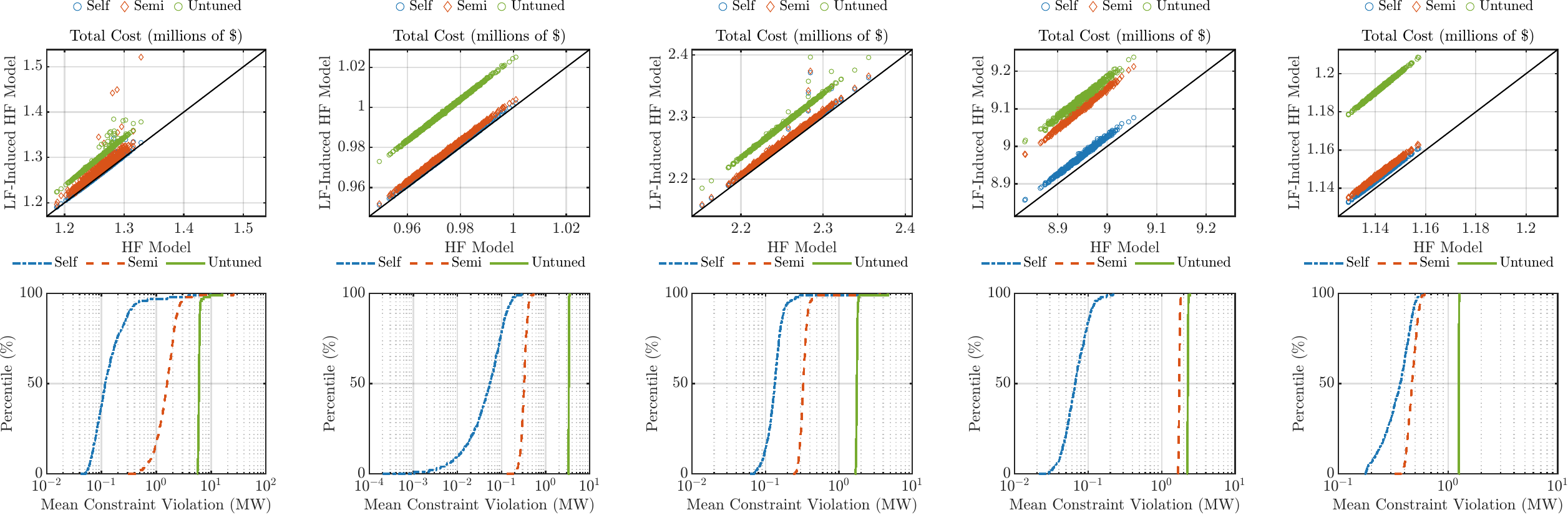}

    \put(10,-2){\makebox(0,0){\footnotesize (a) 1,354-bus system}}
    \put(30,-2){\makebox(0,0){\footnotesize (b) 2,000-bus system}}
    \put(50.5,-2){\makebox(0,0){\footnotesize (c) 4,661-bus system}}
    \put(71,-2){\makebox(0,0){\footnotesize (d) 13,659-bus system}}
    \put(91.25,-2){\makebox(0,0){\footnotesize (e) 30,000-bus system}}

\end{overpic}

\vspace{1em}

\caption{Cost and feasibility evaluation for the OPF benchmark instances. The top panels show cost correlation, and the bottom panels show feasibility across the test samples. 
}
\label{fig:opf_evaluation}
\end{figure*}

Additionally, we evaluate the inductive capability of the GNN by training on the 500-bus system and testing on larger systems, up to 30,000 buses. As shown in Figure~\ref{fig:vio_correlation}, the inductive approach consistently outperforms the untuned model, achieving relative errors below 2\% across all systems.

\begin{figure*}[ht]
\centering
\subfloat[1,354-bus system]{\includegraphics[width=0.19\textwidth,trim=0 0 0 25,
    clip]{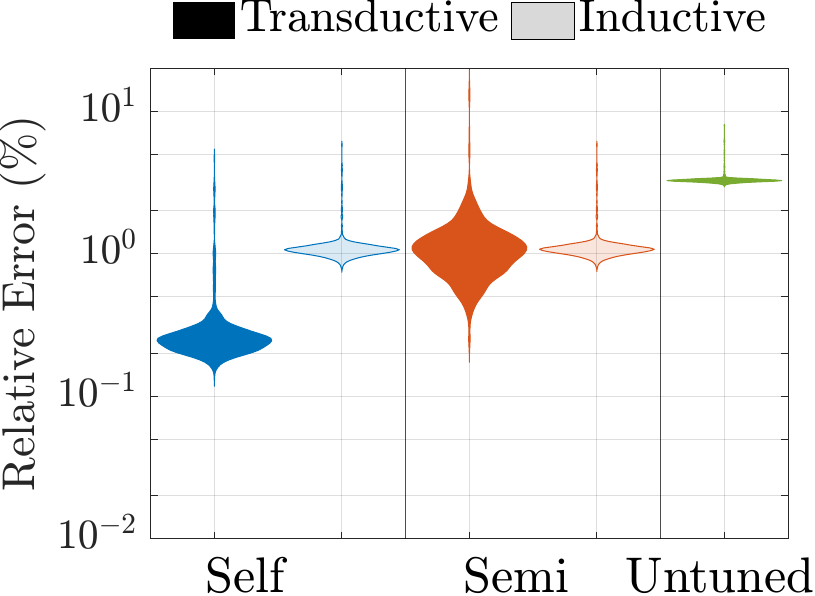}\label{fig:vio_1354}}
\hfill
\subfloat[2,000-bus system]{\includegraphics[width=0.19\textwidth,trim=0 0 0 25,
    clip]{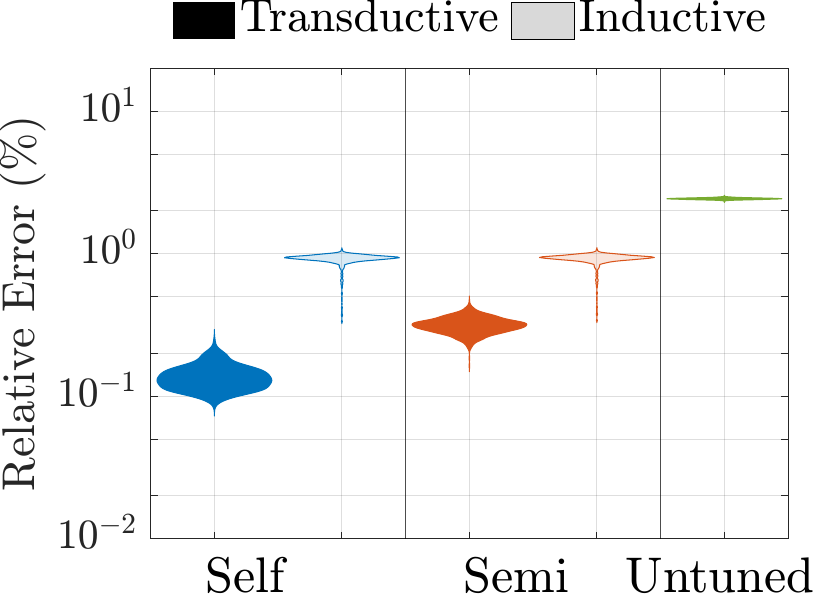}\label{fig:vio_2000}}
\hfill
\subfloat[4,661-bus system]{\includegraphics[width=0.19\textwidth,trim=0 0 0 25,
    clip]{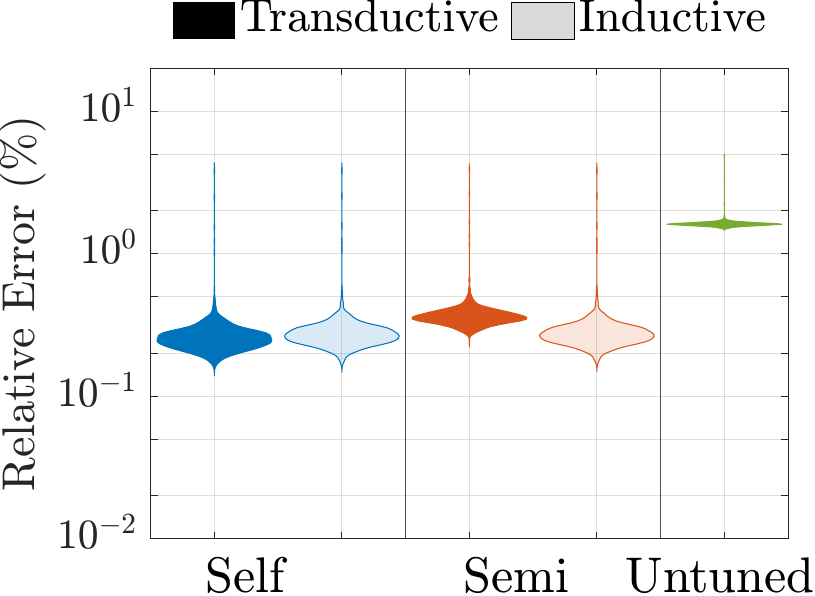}\label{fig:vio_4661}}
\hfill
\subfloat[13,659-bus system]{\includegraphics[width=0.19\textwidth,trim=0 0 0 25,
    clip]{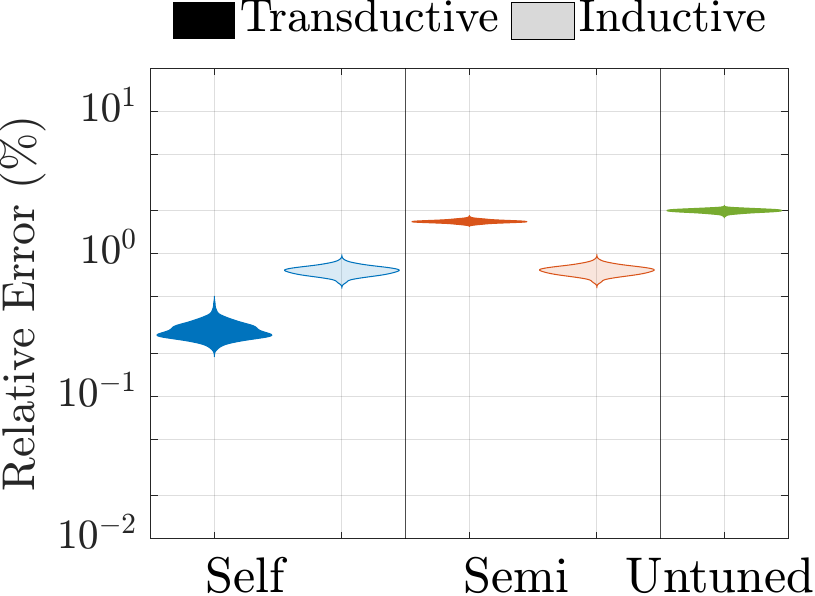}\label{fig:vio_13659}}
\hfill
\subfloat[30,000-bus system]{\includegraphics[width=0.19\textwidth,trim=0 0 0 32,
    clip]{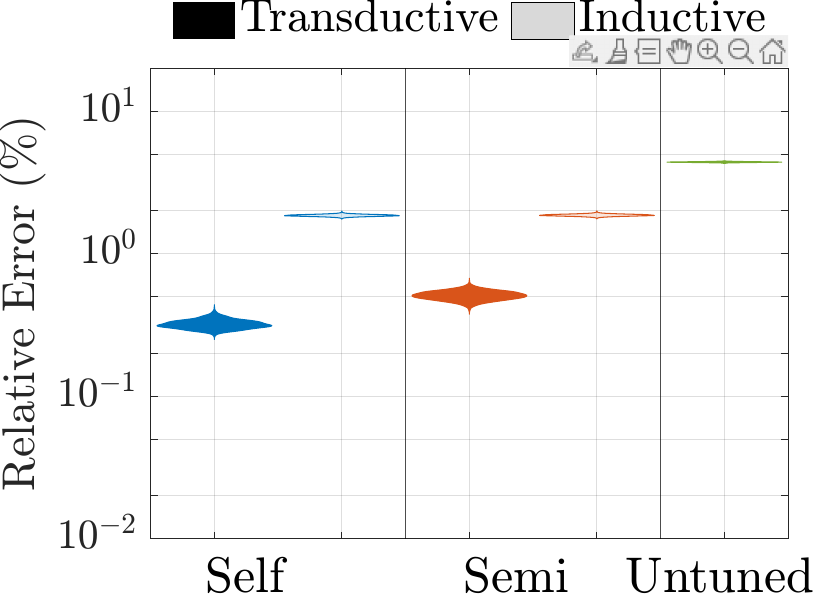}\label{fig:vio_30000}}

\caption{Relative error across systems under transductive and inductive learning. Filled and unfilled violins indicate transductive and inductive results, respectively.}
\label{fig:vio_correlation}
\end{figure*}

Since the low-fidelity model is formulated as a QP and solved on a CPU, its computational efficiency enables significantly faster solution times than the AC-OPF, as shown in Table~\ref{tab:inference_times_transductive}. For the 13,659-bus system, the proposed approach is approximately $26\times$ faster than directly solving the AC-OPF. For the 30,000-bus system, both the proposed approach and the AC-OPF are GPU-implemented. Even under this GPU-accelerated setting, the proposed approach remains approximately $3.5\times$ faster, which highlights its computational advantage at large scale.
\begin{table}[H]
\centering
\TABLE{Average inference times for the transductive OPF experiments. All times are reported in seconds.
\label{tab:inference_times_transductive}}
{\begin{tabular}{lrrrr}
\toprule
\multirow{2}{*}{\textbf{Case}} &
\multicolumn{2}{c}{\textbf{Proposed Approach}} &
\textbf{Untuned} &
\multirow{2}{*}{\textbf{AC-OPF}} \\
\cmidrule(lr){2-3}
&
\textbf{GNN} &
\textbf{Total} &
\textbf{DC-OPF} &
\\
\midrule
1,354        & 0.097 & 0.244  & 0.162 & 2.202   \\
2,000        & 0.152 & 0.385  & 0.285 & 3.495   \\
4,661        & 0.380 & 1.228  & 0.637 & 10.841  \\
13,659       & 1.256 & 4.256  & 3.194 & 112.240 \\
30,000 (GPU) & 3.475 & 6.071 & 2.595 & 21.570  \\
\bottomrule
\end{tabular}
}{}
\end{table}

\subsection{Optimal Gas Flow}

The optimal gas flow problem determines an economically efficient operating point for a gas network by deciding the amount of gas supplied by each producer, the gas flow through pipes and compressors, the pressure at each junction, and the operating pressure ratio of each compressor. The objective is to minimize the total production cost associated with the gas supplied by the receipts. For the HF model, we use the WP formulation, which is an NLP problem that captures the nonlinear physical constraints of the gas network. For the LF model, we use the linear-relaxation Weymouth-physics (LRWP) formulation, which relaxes the nonlinear WP constraints using polyhedral approximations, including piecewise-linear and McCormick relaxations. Additional details are provided in Table~{\ref{tab:gnn_architecture_gas}} of Appendix~\ref{app:gas}.

\subsubsection{Results}
We train a GNN to predict two parameters that weight quadratic gas-flow terms in the LF objective, guiding solutions of the piecewise-linear and McCormick relaxations toward the HF solution. Training requires approximately 2--4 minutes across the two gas network instances. 
The results demonstrate a consistent advantage of the self-supervised approach over both the semi-supervised and untuned models. As shown in Table~\ref{tab:gas_transductive}, the self-supervised approach achieves mean relative objective errors of only $0.032\%$ and $0.457\%$ for GasLib-40-E and GasLib-135-F, respectively, while also substantially reducing the P90 feasibility violations. In contrast, the semi-supervised approach provides essentially no improvement over the untuned model for GasLib-40-E and only a moderate improvement for GasLib-135-F. 
\begin{table}[H]
\centering
\TABLE{HF evaluation of the transductive OGF experiments. Objective errors are reported as mean (maximum) relative error.
\label{tab:gas_transductive}}
{\begin{tabular}{l|cc|cc|cc}
\hline \up
\multirow{2}{*}{\textbf{Case}} &
\multicolumn{2}{c|}{\textbf{Self-Supervised}} &
\multicolumn{2}{c|}{\textbf{Semi-Supervised}} &
\multicolumn{2}{c}{\textbf{Untuned}} \\
\cline{2-7} \up
&
\begin{tabular}[c]{@{}c@{}}Error (\%)\\Mean (Max)\end{tabular} &
\begin{tabular}[c]{@{}c@{}}P90 Feas.\\(kg/s)\end{tabular} &
\begin{tabular}[c]{@{}c@{}}Error (\%)\\Mean (Max)\end{tabular} &
\begin{tabular}[c]{@{}c@{}}P90 Feas.\\(kg/s)\end{tabular} &
\begin{tabular}[c]{@{}c@{}}Error (\%)\\Mean (Max)\end{tabular} &
\begin{tabular}[c]{@{}c@{}}P90 Feas.\\(kg/s)\end{tabular} \\
\hline \up

GasLib-40-E
& 0.032 (0.541) & 25.572
& 0.764 (1.779) & 96.442
& 0.763 (1.779) & 96.237 \\

GasLib-135-F
& 0.457 (0.792) & 96.534
& 1.965 (3.409) & 254.650
& 2.330 (3.409) & 275.380 \down \\

\hline
\end{tabular}
}{}
\end{table}

Figure~\ref{fig:gasmodels_evaluation_results} further illustrates these improvements across the test samples. These results indicate that directly incorporating HF feedback into the learning objective improves both objective quality and HF feasibility, providing more informative guidance for parameterizing the LF model than minimizing the decision-space error alone.

\begin{figure*}[htbp]
\centering

\begin{overpic}[width=0.72\textwidth]{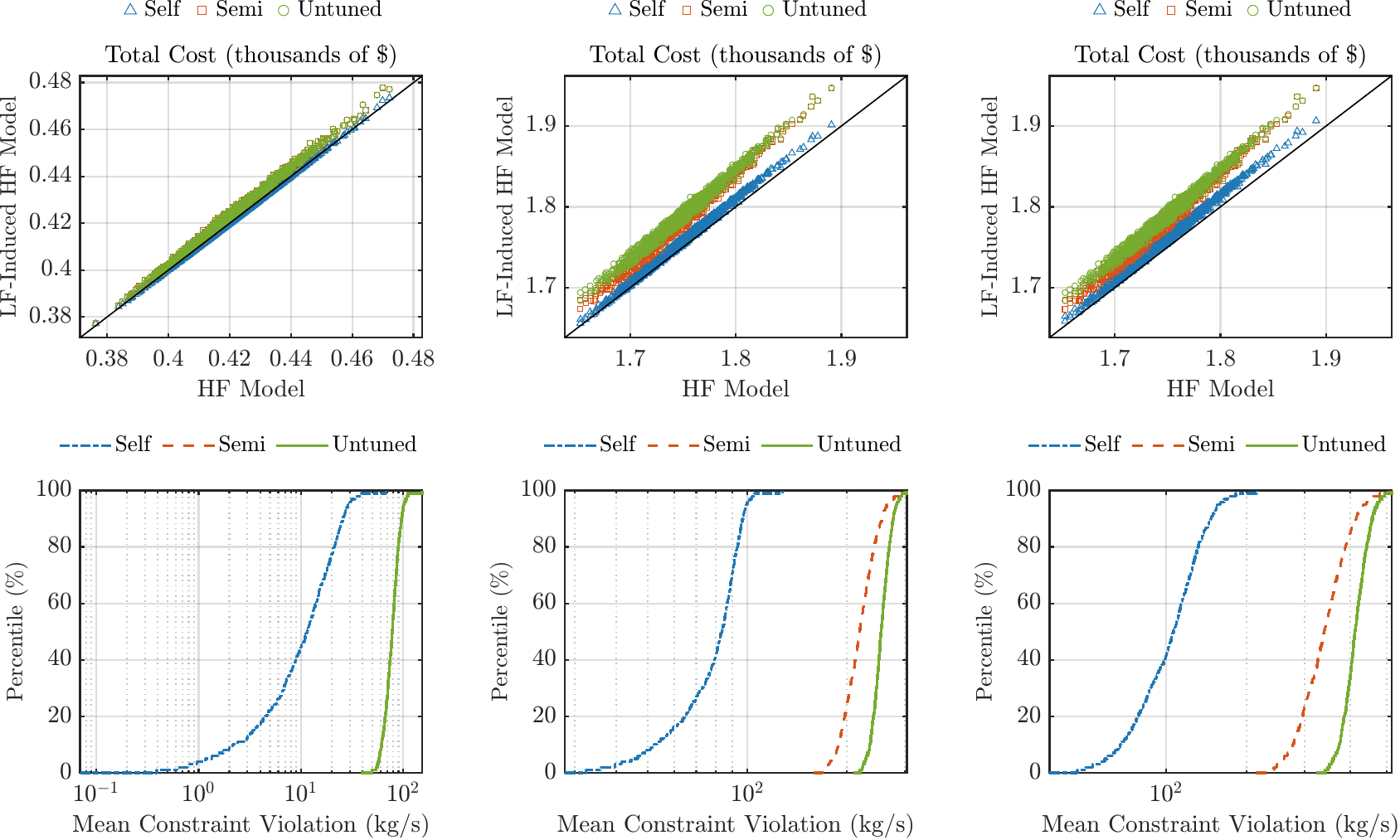}

    \put(16,-2){\makebox(0,0)[c]{\footnotesize (a) GasLib-40E}}
    \put(50,-2){\makebox(0,0)[c]{\footnotesize (b) GasLib-135F}}
    \put(84,-2){\makebox(0,0)[c]{\footnotesize (c) Inductive: 40E to 135F}}

\end{overpic}

\vspace{1.0em}

\caption{Cost and feasibility evaluation for the OGF benchmark instances.
The top panels show cost correlation, and the bottom panels show feasibility
across the test samples.}
\label{fig:gasmodels_evaluation_results}

\end{figure*}

The self-supervised approach also generalizes well in the inductive setting. As shown in Figure~\ref{fig:violin_ogf_inductive}, on GasLib-135-F it achieves a mean relative error of 0.669\%, compared with 1.961\% for the semi-supervised approach. This is only slightly above the transductive self-supervised error of 0.457\% and remains below 1\%, indicating effective generalization to an unseen gas network.

\begin{figure}[ht]
\centering

\includegraphics[
    width=0.2\textwidth,
    trim=0 0 0 25,
    clip
]{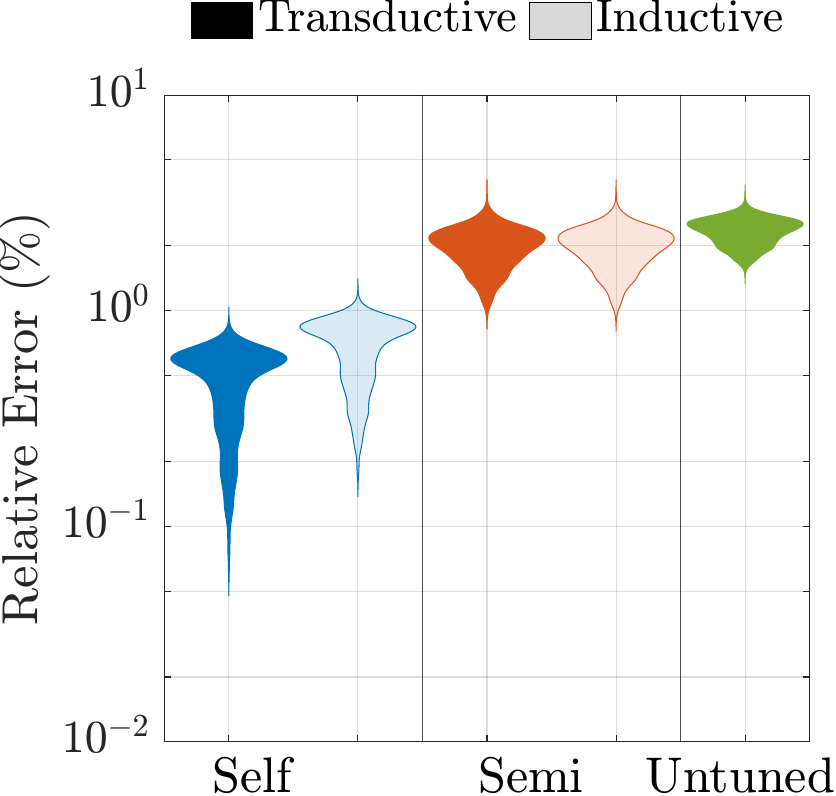}

\caption{Relative objective error for the inductive OGF experiment on GasLib-135-F. Violin plots show the distributions for the self-supervised, semi-supervised, and untuned models.}
\label{fig:violin_ogf_inductive}

\end{figure}

\subsection{The Pooling Problem}

The pooling problem mixes streams with different compositions through intermediate pools to produce final products satisfying quality constraints. We use the pq-formulation \citep{tawarmalani2013convexification}, a nonconvex QCP with bilinear terms $q_{il}y_{lj}$, where $q_{il}$ denotes the fraction of material in pool $l$ originating from source $i$, and $y_{lj}$ denotes the flow from pool $l$ to final product $j$.A common tractable approximation replaces these terms with McCormick relaxations, yielding an LP whose enlarged feasible region may produce solutions that perform poorly under the original bilinear constraints. We add small quadratic penalties that discourage extreme-point solutions while preserving tractability. We refer to the resulting QP as the Quadratic McCormick relaxation and use it as the LF model.

A GNN maps each problem instance to the LF linear objective coefficients. The resulting LF solution \(q\) is then fixed in the HF QCP, which optimizes the remaining flow and slack variables and provides the feedback used for self-supervised training. Test instances of different sizes are generated by perturbing nominal demand and supply by \(\pm10\%\), following a simplified version of \citet{Dey2015}; further details are detailed in Table~\ref{tab:gnn_architecture_pooling} of Appendix~\ref{app:pooling}.

\subsubsection{Results}
As a baseline, we use the \(q\)-variables obtained from the LP McCormick relaxation and evaluate them using the same HF model. Table~\ref{tab:pooling_inference} summarizes the relative objective error and P90 feasibility slack. The self-supervised approach consistently provides the best performance across all five cases, with mean objective errors between \(0.355\%\) and \(2.304\%\) and substantially smaller feasibility violations than the other approaches. The semi-supervised method improves upon the untuned model in Cases 1--4, but its gains are less consistent and it performs worse than the untuned model in Case~5. 

\begin{table}[H]
\centering
\TABLE{HF evaluation of the pooling problem experiments. Objective errors are reported as mean (maximum) relative error.
\label{tab:pooling_inference}}
{\begin{tabular}{l|cc|cc|cc}
\hline \up
\multirow{2}{*}{\textbf{Case}} &
\multicolumn{2}{c|}{\textbf{Self-Supervised}} &
\multicolumn{2}{c|}{\textbf{Semi-Supervised}} &
\multicolumn{2}{c}{\textbf{Untuned}} \\
\cline{2-7} \up
&
\begin{tabular}[c]{@{}c@{}}Error (\%)\\Mean (Max)\end{tabular} &
\begin{tabular}[c]{@{}c@{}}P90 Feas.\\Slack\end{tabular} &
\begin{tabular}[c]{@{}c@{}}Error (\%)\\Mean (Max)\end{tabular} &
\begin{tabular}[c]{@{}c@{}}P90 Feas.\\Slack\end{tabular} &
\begin{tabular}[c]{@{}c@{}}Error (\%)\\Mean (Max)\end{tabular} &
\begin{tabular}[c]{@{}c@{}}P90 Feas.\\Slack\end{tabular} \\
\hline \up

1
& 1.921 (3.328) & $8.897\times10^{-10}$
& 2.500 (4.049) & $4.338\times10^{-3}$
& 5.642 (11.256) & $1.049\times10^{-2}$ \\

2
& 1.004 (2.216) & $1.797\times10^{-4}$
& 8.126 (11.233) & $1.020\times10^{-2}$
& 10.497 (17.250) & $1.388\times10^{-2}$ \\

3
& 1.709 (3.103) & $1.933\times10^{-3}$
& 6.835 (10.936) & $4.291\times10^{-3}$
& 9.943 (14.853) & $8.086\times10^{-3}$ \\

4
& 0.355 (0.719) & $4.387\times10^{-6}$
& 7.578 (11.954) & $3.369\times10^{-3}$
& 8.877 (14.802) & $3.949\times10^{-3}$ \\

5
& 2.304 (4.055) & $6.522\times10^{-4}$
& 18.317 (22.184) & $8.670\times10^{-3}$
& 15.848 (22.832) & $7.582\times10^{-3}$ \down \\

\hline
\end{tabular}
}{}
\end{table}

Figure~\ref{fig:pooling_evaluation_results} further illustrates the cost and feasibility performance across the test instances. The self-supervised approach consistently performs best across all five cases, with mean objective errors below \(2.4\%\) and P90 feasibility slack below \(2\times10^{-3}\). The cost points also remain close to the HF reference across the test samples, while the feasibility distributions are substantially shifted toward zero. These results show the benefit of training directly against HF operational feedback. 
\begin{figure*}[htbp]
\centering

\begin{overpic}[width=\textwidth]{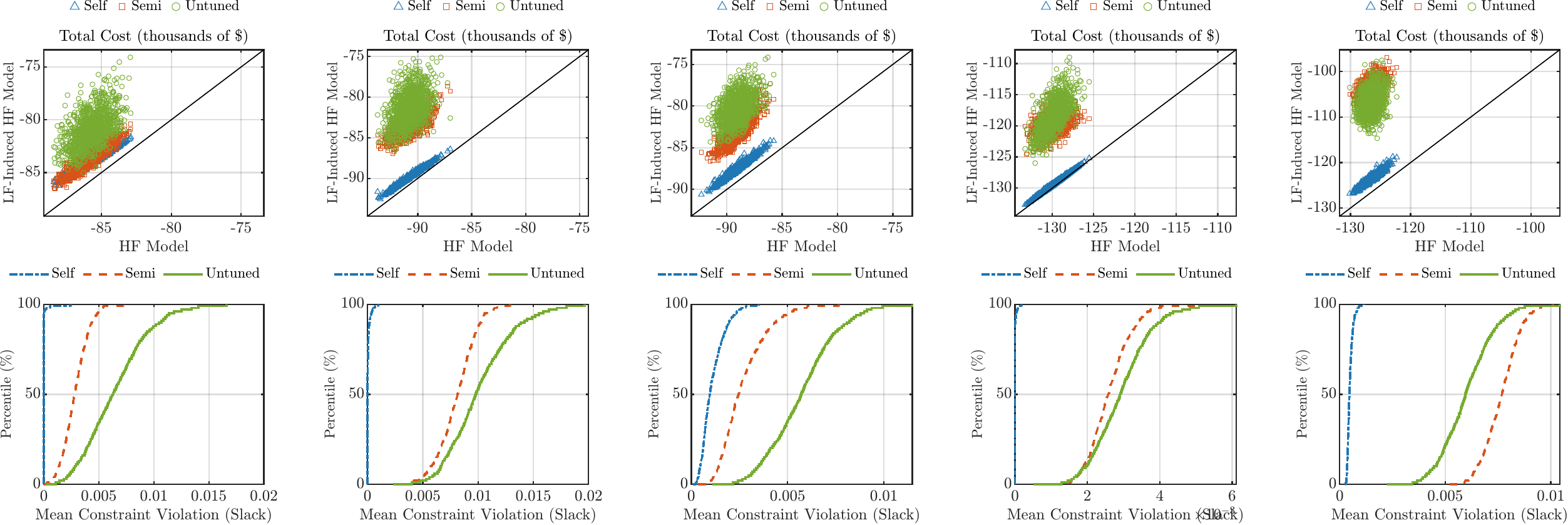}

    \put(10,-2){\makebox(0,0)[c]{\footnotesize (a) Instance 1}}
    \put(30,-2){\makebox(0,0)[c]{\footnotesize (b) Instance 2}}
    \put(50.5,-2){\makebox(0,0)[c]{\footnotesize (c) Instance 3}}
    \put(71.5,-2){\makebox(0,0)[c]{\footnotesize (d) Instance 4}}
    \put(92,-2){\makebox(0,0)[c]{\footnotesize (e) Instance 5}}

\end{overpic}

\vspace{1.0em}

\caption{Cost and feasibility evaluation for the pooling benchmark instances.
The top panels show cost correlation, and the bottom panels show feasibility
across the test samples.}
\label{fig:pooling_evaluation_results}

\end{figure*}

\section{Conclusion}\label{sec:Conclusion}

This paper proposed a self-supervised learning framework for improving tractable low-fidelity optimization models using feedback from high-fidelity formulations. The proposed approach preserves the structure and computational advantages of the low-fidelity model while learning how its parameters should be adjusted as a function of the operating conditions. By embedding the low-fidelity model and the high-fidelity evaluation problem into the training pipeline, the parameters are learned directly with respect to the operational objective of interest, without requiring high-fidelity optimal solutions as training labels.

The computational experiments on optimal power flow, gas network optimization, and pooling problems demonstrate the benefits of this approach across different sources of model mismatch, including approximated nonlinear physics, simplified network models, and relaxations of nonconvex constraints. Across the three applications, self-supervised training consistently improves the high-fidelity performance and feasibility of the solutions obtained from the low-fidelity models. The results also show that the learned parameterizations can generalize to unseen operating conditions and, for the network applications, to systems different from those used during training.

The results show that high-fidelity models can provide useful training signals without being solved to global optimality to generate labels. This is particularly relevant in applications where high-fidelity models are too expensive for repeated online use but can still be evaluated during offline training. Future work will investigate extensions to optimization problems with discrete decisions, stochastic and multistage formulations, and settings in which the selection of the low-fidelity parameters to be tuned is itself learned.
\ACKNOWLEDGMENT{C.L. would like to acknowledge the financial support from the Office of Naval Research (ONR) award
N000142412641 and the ExxonMobil Corporation. 

\noindent The authors affiliated with the University of Colorado Boulder acknowledge support from the startup funding provided by the Department of Electrical, Computer, and Energy Engineering.}

\bibliographystyle{informs2014} \bibliography{sample}

\newpage
\begin{APPENDICES}

\section{Common Computational and Training Details}
\label{app:common}

This appendix provides implementation and experimental details for the optimal power flow, optimal gas flow, and pooling applications considered in Section~\ref{sec:Results}. Across all three applications, the proposed learning framework follows the same general structure. A graph neural network (GNN) maps the operating condition and static problem data to a set of parameters of the low-fidelity (LF) optimization model. The
resulting LF problem is solved to obtain the shared decision variables, which are subsequently evaluated using the corresponding high-fidelity (HF) model.

For self-supervised learning, the HF model is incorporated directly into the training loop. The HF evaluation objective provides the training loss, and sensitivities with respect to the LF decision are obtained from the dual multipliers associated with the LF--HF coupling constraints. These sensitivities are propagated through the differentiable LF optimization layer and subsequently through the GNN.

For semi-supervised learning, the HF evaluation model is not included in the training loop. Instead, HF reference solutions are computed offline, and the GNN and differentiable LF model are trained to minimize the discrepancy between the shared LF and HF decision variables.

\subsection{Differentiation Through the LF Models}

The LF optimization models are differentiated implicitly through their optimality conditions. Given the gradient of the training loss with respect to the LF decision variables, \texttt{DiffOpt.jl} computes the corresponding vector--Jacobian product with respect to the learned LF parameters. The resulting sensitivities are then propagated through the
GNN using automatic differentiation.

To improve numerical robustness of this differentiation step, we
regularize the linear systems solved internally by \texttt{DiffOpt.jl}.
Instead of solving
\[
\mathbf{LHS}\backslash\mathbf{RHS},
\]
we solve
\[
\left(\mathbf{LHS}+\epsilon\mathbf{I}\right)
\backslash\mathbf{RHS},
\qquad
\epsilon=10^{-8}.
\]
This modification avoids occasional
\texttt{LinearAlgebra.SingularException} errors caused by singular or nearly singular systems encountered during differentiation. The regularization parameter is sufficiently small that it improves numerical stability without materially affecting the LF solution or the computed sensitivities.

\subsection{HF Sensitivities}

The HF models are not differentiated through their solver iterations. Instead, the sensitivity of the optimal HF evaluation value with respect to the LF decision is obtained from the dual multipliers of the constraints that couple the LF decision to the HF model, as described in  Section~\ref{sec:self_supervised}. Consequently, the backward pass requires differentiation through the LF optimization problem but not through the internal iterations used to solve the HF NLP or QCP.

\section{Optimal Power Flow}
\label{app:opf}

\subsection{HF and LF Models}

The HF model for the power-system experiments is the AC optimal power flow (AC-OPF), which captures nonlinear AC power-flow equations, voltage magnitudes, reactive power, transmission losses, and the operational limits of generators and transmission elements. The LF model is the DC optimal power flow (DC-OPF), formulated as a convex quadratic program. The DC approximation neglects active-power losses, assumes voltage magnitudes close to one per unit, and linearizes the active-power flow relationships.

The shared decision passed from the LF model to the HF evaluation model is the active-power generation dispatch. Given the dispatch produced by the DC-OPF, the HF evaluation problem imposes
\begin{equation}
P_g
=
\widehat P_g^{\mathrm{DC}}
+
s_g^+
-
s_g^-,
\qquad
s_g^+,s_g^-\geq0,
\end{equation}
where $s_g^+$ and $s_g^-$ represent corrective redispatch required to recover feasibility under the AC model. The HF objective includes the AC generation cost together with a penalty on these corrective actions.

\subsection{Parameterized DC-OPF Model}

The learned parameters modify the active-power demand used by the DC-OPF. For each bus $i$, the GNN predicts a positive scaling coefficient $\beta_i$, and the active demand supplied to the LF model is 
\begin{equation}
\widetilde P_i^d
=
\beta_i P_i^d
+
G_i^{\mathrm{sh}},
\end{equation}
where $P_i^d$ is the original active demand and $G_i^{\mathrm{sh}}$ is
the shunt conductance.

The purpose of this parameterization is to compensate implicitly for effects that are omitted by the DC approximation, particularly active-power losses. Importantly, the GNN does not directly predict the generator dispatch. It predicts parameters of the DC-OPF, and the dispatch remains the solution of the resulting optimization model.

The complete mapping is
\begin{equation}
(X,E,G)
\longrightarrow
\bm{\beta}
\longrightarrow
\operatorname{DCOPF}(\bm{\beta})
\longrightarrow
\widehat{\bm P}_g^{\mathrm{DC}}.
\end{equation}
For self-supervised learning, this is followed by
\begin{equation}
\widehat{\bm P}_g^{\mathrm{DC}}
\longrightarrow
\operatorname{ACOPF\text{-}slack}
\longrightarrow
\Phi^{\mathrm{AC}},
\end{equation}
where $\Phi^{\mathrm{AC}}$ denotes the HF evaluation loss.

\subsection{Benchmark Systems and Instance Generation}

We consider systems ranging from 500 to 30,000 buses. Table \ref{tab:opf_problem_size} reports the dimensions of the corresponding AC-OPF and DC-OPF formulations.

\begin{table}[H]
\centering
\TABLE{Number of variables and constraints for the AC-OPF and DC-OPF formulations.
\label{tab:opf_problem_size}}
{\begin{tabular}{l|cc|cc}
\hline \up
\multirow{2}{*}{\textbf{Case}} &
\multicolumn{2}{c|}{\textbf{AC-OPF}} &
\multicolumn{2}{c}{\textbf{DC-OPF}} \\
\cline{2-5} \up
& Variables & Constraints & Variables & Constraints \\
\hline \up
500    & 4,596   & 6,268   & 1,899  & 2,685 \\
1,354  & 11,192  & 41,813  & 3,605  & 9,558 \\
2,000  & 19,008  & 71,427  & 5,871  & 16,182 \\
4,661  & 34,758  & 130,984 & 11,382 & 29,852 \\
13,659 & 117,370 & 435,770 & 38,218 & 101,870 \\
30,000 & 208,624 & 664,840 & 68,919 & 178,465 \down\\
\hline
\end{tabular}
}{}
\end{table}

Operating conditions are generated by perturbing the nominal demands. Active-power demand varies by up to $\pm30\%$, while reactive-power demand varies between its nominal value and a $15\%$ reduction. At each training iteration, a demand realization is sampled from this distribution. Evaluation is performed on 1,000 unseen operating conditions drawn from the same distribution.

For the inductive experiments, the GNN is trained only on the 500-bus system and applied without retraining to the larger networks.

\subsection{Graph Representation and GNN Architecture}

The GNN follows the transmission-network topology. Buses define the graph nodes and active transmission branches define the graph edges.

Each bus $i$ is represented by
\begin{equation}
\mathbf{x}_i =
\begin{bmatrix}
P_i^{d} &
Q_i^{d} &
V_i^{\max} &
V_i^{\min} &
B_i^{\mathrm{sh}} &
G_i^{\mathrm{sh}}
\end{bmatrix}^{\top},
\end{equation}
where $P_i^{d}$ and $Q_i^{d}$ denote active and reactive demands,
$V_i^{\max}$ and $V_i^{\min}$ are voltage-magnitude limits, and
$B_i^{\mathrm{sh}}$ and $G_i^{\mathrm{sh}}$ are the shunt susceptance
and conductance.

Each branch $k$ is represented by
\begin{equation}
\mathbf{e}_k =
\begin{bmatrix}
r_k &
x_k &
b_k &
S_k^{\max} &
\tau_k
\end{bmatrix}^{\top},
\end{equation}
where $r_k$, $x_k$, $b_k$, $S_k^{\max}$, and $\tau_k$ denote branch resistance, reactance, charging susceptance, thermal limit, and transformer tap ratio.

The GNN contains three edge-aware GATv2 convolutional layers with hidden dimensions 1024, 512, and 256. The resulting bus embeddings are passed through a two-layer decoder with dimensions $256\rightarrow64\rightarrow1$. A Softplus output transformation produces one positive coefficient $\beta_i$ for each bus.

Table~\ref{tab:gnn_architecture_opf} summarizes the architecture.

\begin{table}[t]
\centering
\caption{GNN architecture and optimization-aware pipeline for OPF.}
\label{tab:gnn_architecture_opf}
\small
\setlength{\tabcolsep}{8pt}
\renewcommand{\arraystretch}{1.05}

\begin{tabular*}{\textwidth}{
    @{\extracolsep{\fill}}
    l c c c
}
\hline
\textbf{Stage} &
\textbf{Layer} &
\textbf{Input} &
\textbf{Output} \\
\hline

\multicolumn{4}{c}{\textbf{Graph Representation}} \\
\hline
Node features & Input & -- & $6\times |N|$ \\
Edge features & Input & -- & $5\times |E|$ \\

\hline
\multicolumn{4}{c}{\textbf{GNN Encoder}} \\
\hline
Message passing
& GATv2Conv 1 & $(6,5)$ & $1024\times |N|$ \\
& GATv2Conv 2 & $(1024,5)$ & $512\times |N|$ \\
& GATv2Conv 3 & $(512,5)$ & $256\times |N|$ \\

\hline
\multicolumn{4}{c}{\textbf{Bus-Level Decoder}} \\
\hline
Coefficient decoder
& Dense 1 & $256\times |N|$ & $64\times |N|$ \\
& Dense 2 & $64\times |N|$ & $1\times |N|$ \\
Positive output
& Softplus & $1\times |N|$
& $\bm{\beta}\in\mathbb{R}_{+}^{|N|}$ \\

\hline
\multicolumn{4}{c}{\textbf{Optimization Pipeline}} \\
\hline
Demand parameterization
& Scaling
& $\bm{\beta},\bm P^d,\bm G^{\mathrm{sh}}$
& $\widetilde{\bm P}^d$ \\
Low-fidelity layer
& DC-OPF
& $\widetilde{\bm P}^d$
& $\widehat{\bm P}_{g}^{\mathrm{DC}}$ \\
High-fidelity layer
& AC-OPF-slack
& $\widehat{\bm P}_{g}^{\mathrm{DC}}$
& $\Phi^{\mathrm{AC}}$ \\
\hline
\end{tabular*}
\end{table}

\subsection{Training Procedure}

\paragraph{Initialization.}
Training begins with a coefficient-initialization stage in which the GNN is trained for 100 epochs to produce $\beta_i=1$ for every bus. AdamW is used with a learning rate of $10^{-4}$. This initialization corresponds to the original untuned DC-OPF.

\paragraph{Self-supervised training.}
The initialized GNN is embedded in the complete LF--HF training pipeline. The training loss is the HF evaluation objective,
\begin{equation}
\mathcal L_{\mathrm{self}}
=
\Phi^{\mathrm{AC}}.
\end{equation}
The DC-OPF layer is differentiated using \texttt{DiffOpt.jl}. The sensitivities of the HF evaluation value with respect to the DC-OPF dispatch are obtained from the dual multipliers of the generator-linking constraints.

\paragraph{Semi-supervised training.}
The HF evaluation layer is omitted. Instead, the DC-OPF dispatch is
compared with a previously computed AC-OPF dispatch,
\begin{equation}
\mathcal L_{\mathrm{semi}}
=
\operatorname{MSE}
\left(
\widehat{\bm P}_g^{\mathrm{DC}},
\bm P_g^{\mathrm{AC}}
\right).
\end{equation}

Both approaches use AdamW with a learning rate of $10^{-6}$, a
mini-batch size of one, and random seed 20. Training proceeds for at most 5,000 epochs and is also subject to the time limits reported in Table~\ref{tab:training_times}. For model selection, the checkpoint with the lowest mean relative HF objective error is retained.

\subsection{Large-Scale 30,000-Bus Implementation}

For the 30,000-bus system, the HF AC-OPF correction model is implemented using \texttt{ExaModels.jl} and solved on a GPU with \texttt{MadNLP.jl}. The backward pass does not differentiate through the iterations of \texttt{MadNLP}. Instead, the dual multipliers associated with
\begin{equation}
P_g-\widehat P_g^{\mathrm{DC}}-s_g^++s_g^-=0
\end{equation}
provide the required gradient of the HF evaluation objective with respect to the LF dispatch.

This allows the HF model to benefit from GPU acceleration while preserving the same self-supervised training principle used for the smaller systems.

\subsection{Training Time}

For a fair comparison, self-supervised and semi-supervised training are performed under the same time budget for each system. The semi-supervised model generally completes more epochs because it does not solve the HF model during training. Table~\ref{tab:training_times} summarizes the computational cost.

\begin{table}[H]
\centering
\TABLE{Training time summary for the OPF experiments. Forward and backward times are reported in minutes.
\label{tab:training_times}}
{\begin{tabular}{l|c|cc|cc|c|cc}
\hline
\multirow{2}{*}{\textbf{Case}} &
\multirow{2}{*}{\textbf{Method}} &
\multicolumn{2}{c|}{\textbf{AC}} &
\multicolumn{2}{c|}{\textbf{DC}} &
\textbf{Time Limit} &
\multicolumn{2}{c}{\textbf{Stopping}} \\
\cline{3-4}
\cline{5-6}
\cline{8-9}
& & Forward & Backward & Forward & Backward & (min) & Best Epoch & Stop \\
\hline

\multirow{2}{*}{500}
& Self & 2.62 & $\approx0$ & 0.51 & 0.82 & 5 & 78 & 477 \\
& Semi & -- & -- & 1.05 & 1.62 & 5 & 20 & 1156 \\
\hline

\multirow{2}{*}{1,354}
& Self & 11.97 & $\approx0$ & 1.74 & 12.80 & 30 & 35 & 684 \\
& Semi & -- & -- & 2.88 & 21.90 & 30 & 352 & 1174 \\
\hline

\multirow{2}{*}{2,000}
& Self & 65.63 & $\approx0$ & 7.28 & 23.54 & 120 & 1788 & 1795 \\
& Semi & -- & -- & 15.20 & 48.44 & 120 & 3557 & 3722 \\
\hline

\multirow{2}{*}{4,661}
& Self & 37.88 & $\approx0$ & 6.98 & 64.93 & 120 & 20 & 477 \\
& Semi & -- & -- & 10.95 & 96.14 & 120 & 10 & 835 \\
\hline

\multirow{2}{*}{13,659}
& Self & 79.82 & $\approx0$ & 11.58 & 59.69 & 180 & 53 & 221 \\
& Semi & -- & -- & 22.61 & 130.39 & 180 & 34 & 429 \\
\hline

\multirow{2}{*}{30,000 (GPU)}
& Self & 69.43 & $\approx0$ & 12.10 & 130.10 & 240 & 99 & 131 \\
& Semi & -- & -- & 15.18 & 190.59 & 240 & 12 & 159 \\
\hline
\end{tabular}
}{}
\end{table}

\section{Optimal Gas Flow}
\label{app:gas}

\subsection{HF and LF Models}

The HF gas-network model is the nonlinear WP formulation implemented in \texttt{GasModels.jl}. It represents the nonlinear relationship between gas pressure and flow together with compressor operation, receipt injections, withdrawals, and pressure limits.

The LF model is the LRWP formulation, which replaces selected nonlinear relationships with polyhedral and McCormick relaxations. The resulting model is significantly easier to solve but may select receipt injections that are suboptimal or incompatible with the original nonlinear gas-flow equations.

The shared decisions between the LF and HF models are the receipt injections. Given the LF receipt vector
$\widehat{\bm g}^{\mathrm{LF}}$, the HF evaluation model uses
\begin{equation}
g_r
=
\widehat g_r^{\mathrm{LF}}
+
s_r^+
-
s_r^-,
\qquad
s_r^+,s_r^-\geq0.
\end{equation}
The slack variables quantify the adjustments required to make the LF decision compatible with the nonlinear HF model.

\subsection{Parameterized LRWP Model}

Rather than modifying the gas-network constraints, the GNN modifies the objective of the LRWP model. It produces two classes of positive coefficients:
\begin{itemize}
    \item junction-level coefficients $\beta_i$, which weight quadratic pressure-related terms; and
    \item pipe-level coefficients $\alpha_k$, which weight quadratic flow-related terms.
\end{itemize}

The LF objective therefore contains the nominal receipt-production cost together with learned quadratic terms of the general form
\begin{equation}
\sum_{i\in J}\beta_i \pi_i^2
+
\sum_{k\in P}\alpha_k f_k^2.
\end{equation}

These terms do not attempt to reconstruct the nonlinear gas equations explicitly. Instead, they reshape the objective of the tractable LRWP model so that its optimizer produces receipt decisions that perform well when evaluated using the nonlinear WP model.

The corresponding self-supervised computational pipeline is
\begin{equation}
(X,E,G)
\longrightarrow
(\bm{\beta},\bm{\alpha})
\longrightarrow
\operatorname{LRWP}(\bm{\beta},\bm{\alpha})
\longrightarrow
\widehat{\bm g}^{\mathrm{LF}}
\longrightarrow
\operatorname{WP\text{-}slack}
\longrightarrow
\Phi^{\mathrm{HF}}.
\end{equation}

\subsection{Benchmark Systems and Instance Generation}

We use the GasLib-40-E and GasLib-135-F benchmark networks. Their problem sizes are reported in Table~\ref{tab:process_problem_size_gas}.

\begin{table}[H]
\centering
\TABLE{Number of variables and constraints for the OGF benchmark problems.
\label{tab:process_problem_size_gas}}
{\begin{tabular}{llrrrr}
\toprule
\multirow{2}{*}{\textbf{Case}} &
\multirow{2}{*}{\textbf{Model}} &
\multirow{2}{*}{\textbf{Variables}} &
\multicolumn{3}{c}{\textbf{Constraints}} \\
\cmidrule(lr){4-6}
& & &
\textbf{Linear} &
\textbf{Nonlinear} &
\textbf{Bounds} \\
\midrule
\multirow{2}{*}{GasLib-40-E}
& WP   & 101   & 131   & 57  & 200 \\
& LRWP & 307   & 346   & 0   & 524 \\
\midrule
\multirow{2}{*}{GasLib-135-F}
& WP   & 347   & 476   & 228 & 692 \\
& LRWP & 1,109 & 1,329 & 0   & 1,878 \\
\bottomrule
\end{tabular}
}{}
\end{table}

For each gas network, 1,000 realizations are generated. Training uses one realization, while the remaining operating conditions are used to assess generalization.

The original benchmark data are modified to produce a sufficiently meaningful LF--HF mismatch for evaluating the proposed learning method. The modifications include changes to selected compressor ratios and pipe diameters together with the addition of receipt locations. These changes increase the difference between the relaxed and nonlinear formulations without altering the structure of the learning framework.

For the inductive experiment, the model trained on GasLib-40-E is applied directly to GasLib-135-F.

\subsection{Graph Representation and GNN Architecture}

Junctions define graph nodes and physical pipes define graph edges.

Each junction $i$ has the feature vector
\begin{equation}
\mathbf{x}_i =
\begin{bmatrix}
L_i &
P_i^U &
P_i^L &
T_i
\end{bmatrix}^{\top},
\end{equation}
where $L_i$ is the sampled withdrawal, $P_i^U$ and $P_i^L$ are the upper and lower pressure limits, and $T_i$ is the GasModels junction-type indicator.

Each pipe $k$ has the feature vector
\begin{equation}
\mathbf{e}_k =
\begin{bmatrix}
D_k &
L_k^{\mathrm{pipe}} &
F_k^{\mathrm{fric}}
\end{bmatrix}^{\top},
\end{equation}
where $D_k$, $L_k^{\mathrm{pipe}}$, and $F_k^{\mathrm{fric}}$ denote pipe diameter, length, and friction factor. The direction of each graph edge follows the stored from-junction and to-junction orientation.

The GNN contains three edge-aware GATv2 layers with five attention heads. The layers contain 512, 256, and 128 features per head, producing concatenated representations with dimensions 2560, 1280, and 640. The final junction embeddings are processed by two output decoders. A junction-level decoder produces $\beta_i$, while a pipe-level decoder uses the concatenated embeddings of the incident junctions to produce $\alpha_k$. Both outputs use Softplus transformations to ensure
positivity.

\begin{table}[t]
\centering
\caption{GNN architecture and optimization-aware pipeline for the gas-flow problem.}
\label{tab:gnn_architecture_gas}
\small
\setlength{\tabcolsep}{6pt}
\renewcommand{\arraystretch}{1.05}

\begin{tabular*}{\textwidth}{
    @{\extracolsep{\fill}}
    l c c c
}
\hline
\textbf{Stage} &
\textbf{Layer} &
\textbf{Input} &
\textbf{Output} \\
\hline

\multicolumn{4}{c}{\textbf{Graph Representation}} \\
\hline
Node features & Input & -- & $4\times |J|$ \\
Edge features & Input & -- & $3\times |P|$ \\

\hline
\multicolumn{4}{c}{\textbf{GNN Encoder (5 Attention Heads)}} \\
\hline
Message passing
& GATv2Conv 1 & $(4,3)$ & $512\times5=2560$ \\
& GATv2Conv 2 & $(2560,3)$ & $256\times5=1280$ \\
& GATv2Conv 3 & $(1280,3)$ & $128\times5=640$ \\

\hline
\multicolumn{4}{c}{\textbf{Junction-Level Decoder}} \\
\hline
Coefficient decoder
& Dense 1 & $640\times |J|$ & $64\times |J|$ \\
& Dense 2 & $64\times |J|$ & $1\times |J|$ \\
Positive output
& Softplus & $1\times |J|$
& $\bm{\beta}\in\mathbb{R}_{+}^{|J|}$ \\

\hline
\multicolumn{4}{c}{\textbf{Pipe-Level Decoder}} \\
\hline
Pipe representation
& Concatenation & $640+640$ & $1280$ \\
Coefficient decoder
& Dense 1 & $1280\times |P|$ & $64\times |P|$ \\
& Dense 2 & $64\times |P|$ & $1\times |P|$ \\
Positive output
& Softplus & $1\times |P|$
& $\bm{\alpha}\in\mathbb{R}_{+}^{|P|}$ \\

\hline
\multicolumn{4}{c}{\textbf{Optimization Pipeline}} \\
\hline
Low-fidelity layer
& LRWP
& $\bm{\alpha},\bm{\beta}$
& $\widehat{\bm g}^{\mathrm{LF}}$ \\
High-fidelity layer
& WP-slack
& $\widehat{\bm g}^{\mathrm{LF}}$
& $\Phi^{\mathrm{HF}}$ \\
\hline
\end{tabular*}
\end{table}

\subsection{Training Procedure}

\paragraph{Initialization.}
The GNN is first trained for 100 epochs to produce coefficient values equal to one. AdamW is used with a learning rate of $10^{-4}$. This initialization reproduces the nominal LRWP objective before self-supervised or semi-supervised training.

\paragraph{Self-supervised training.}
The GNN predicts $\bm{\alpha}$ and $\bm{\beta}$, which parameterize the LRWP problem and produce the receipt-injection vector $\widehat{\bm g}^{\mathrm{LF}}$. The HF evaluation loss is
\begin{equation}
\mathcal L_{\mathrm{self}}
=
\sum_{r\in R} K_r g_r
+
\sum_{r\in R}
K_r
\left(s_r^+ + s_r^-\right)^2.
\end{equation}
The LRWP layer is differentiated with \texttt{DiffOpt.jl}, while the dual multipliers of the HF receipt-linking constraints provide the sensitivity with respect to the LF receipt vector.

\paragraph{Semi-supervised training.}
The HF evaluation layer is omitted. Instead, the predicted receipt vector is compared with the HF reference receipt injections:
\begin{equation}
\mathcal L_{\mathrm{semi}}
=
\operatorname{MSE}
\left(
\frac{
\widehat{\bm g}^{\mathrm{LF}}
}{
\max(\bm G^{\mathrm{HF}},10^{-7})
},
\bm 1
\right),
\end{equation}
where the maximum and division operations are componentwise.

Both approaches use AdamW with a learning rate of $10^{-5}$, a
mini-batch size of one, and random seed 20.

\subsection{Training Time}

The gas-network models are substantially smaller than the OPF and pooling instances, allowing training to be completed in a few minutes. The self-supervised and semi-supervised approaches are given the same training time for each network.

\begin{table}[H]
\centering
\TABLE{Training time summary for the OGF experiments. Times are reported in minutes.
\label{tab:training_times_summary_ogf}}
{\begin{tabular}{l|c|c|cc|cc|c}
\hline
\multirow{2}{*}{\textbf{Case}} &
\multirow{2}{*}{\textbf{Method}} &
\multirow{2}{*}{\textbf{Forward Passes}} &
\multicolumn{2}{c|}{\textbf{LF}} &
\multicolumn{2}{c|}{\textbf{HF}} &
\multirow{2}{*}{\textbf{Time Limit}} \\
\cline{4-5}
\cline{6-7}
& & & Forward & Backward & Forward & Backward & \\
\hline

\multirow{2}{*}{GasLib-40-E}
& Self & 1209 & 0.389 & 0.195 & 0.787 & $1.50\times10^{-4}$ & 2 \\
& Semi & 1809 & 0.687 & 0.248 & -- & -- & 2 \\
\hline

\multirow{2}{*}{GasLib-135-F}
& Self & 665 & 0.863 & 0.720 & 1.728 & $1.67\times10^{-4}$ & 4 \\
& Semi & 1083 & 1.608 & 0.949 & -- & -- & 4 \\
\hline
\end{tabular}
}{}
\end{table}

\section{Pooling Problem}
\label{app:pooling}

\subsection{HF and LF Models}

The HF pooling model is the original nonconvex QCP, which contains bilinear relationships between the pooling variables. The LF model is constructed from the McCormick relaxation of these bilinear terms, augmented with small quadratic terms that reduce the tendency of the relaxation to select undesirable extreme-point solutions. We refer to this model as the Quadratic McCormick relaxation.

The shared quantities between the LF and HF models are the $q$-variables. After solving the parameterized LF problem, the resulting $\widehat{\bm q}$ is fixed in the HF model. The remaining HF variables are optimized subject to the original bilinear relationships, with slack variables included where necessary to ensure that the HF evaluation problem remains well defined.

\subsection{Parameterized Quadratic McCormick Model}

The GNN predicts four families of LF objective coefficients,
\begin{equation}
(\bm{\alpha},\bm{\beta},\bm{\sigma},\bm{\eta}),
\end{equation}
associated with the different node-, edge-, and path-level terms of the Quadratic McCormick model.

The coefficients modify the objective of the tractable LF problem rather than directly predicting the pooling decisions. The learned objective therefore changes which point of the relaxed feasible region is selected while retaining the same LF constraint structure.

The GNN output is used according to
\begin{equation}
(X,E,G)
\longrightarrow
H
\longrightarrow
(\bm{\alpha},\bm{\beta},\bm{\sigma},\bm{\eta})
\longrightarrow
\operatorname{Quadratic\ McCormick}
\longrightarrow
\widehat{\bm q}.
\end{equation}

For self-supervised learning, the LF solution is subsequently evaluated using the HF pooling model,
\begin{equation}
\widehat{\bm q}
\longrightarrow
\operatorname{HF\text{-}slack}
\longrightarrow
\Phi^{\mathrm{HF}}.
\end{equation}

\subsection{Benchmark Systems and Instance Generation}

We consider five synthetic pooling instances of increasing size. Table \ref{tab:problem_size_pooling} summarizes the number of input streams, pools, output products, specifications, variables, and constraints.

\begin{table}[H]
\centering
\TABLE{Problem sizes for the pooling benchmark instances.
\label{tab:problem_size_pooling}}
{\begin{tabular}{l|cccc|ccc|cc}
\hline \up
\multirow{3}{*}{\textbf{Case}} &
\multicolumn{4}{c|}{\textbf{Size}} &
\multicolumn{3}{c|}{\textbf{High-Fidelity}} &
\multicolumn{2}{c}{\textbf{Low-Fidelity}} \\
\cline{2-10} \up
& \multirow{2}{*}{Inputs}
& \multirow{2}{*}{Pools}
& \multirow{2}{*}{Outputs}
& \multirow{2}{*}{Specs}
& \multirow{2}{*}{Variables}
& \multicolumn{2}{c|}{Constraints}
& \multirow{2}{*}{Variables}
& \multirow{2}{*}{Constraints} \\
\cline{7-8} \up
& & & & & & Linear & Quadratic & & \\
\hline \up
1 & 25 & 18 & 25 & 8  & 2,542  & 1,096 & 2,135  & 2,542  & 7,501 \\
2 & 25 & 22 & 30 & 10 & 3,189  & 1,474 & 2,676  & 3,189  & 9,502 \\
3 & 30 & 22 & 35 & 10 & 4,391  & 1,733 & 3,770  & 4,391  & 13,043 \\
4 & 40 & 30 & 45 & 10 & 10,127 & 2,706 & 9,015  & 10,127 & 29,751 \\
5 & 40 & 30 & 50 & 14 & 11,273 & 3,355 & 10,083 & 11,273 & 33,604 \down\\
\hline
\end{tabular}
}{}
\end{table}

Test instances are generated by perturbing nominal supply and demand values by $\pm10\%$. During the initial training stage, the nominal instance is used. Fine-tuning subsequently uses the nominal instance together with 24 additional realizations from the target distribution. Performance is evaluated on unseen realizations generated from the same distribution.

\subsection{Graph Representation and GNN Architecture}

The pooling network is represented as a heterogeneous graph. Sources $I$, intermediate pools $L$, and terminals $J$ define three node types, while source--pool, pool--terminal, and source--terminal connections define three edge relations.

Each node $v$ is represented by
\begin{equation}
\mathbf{x}_v =
\begin{bmatrix}
\mathbf t_v &
c_v
\end{bmatrix}^{\top},
\end{equation}
where $\mathbf t_v$ is a three-dimensional one-hot encoding identifying the node type and $c_v$ denotes the corresponding capacity or demand.

Each edge $(u,v)$ is represented by
\begin{equation}
\mathbf{e}_{uv} =
\begin{bmatrix}
C_{uv} &
U_{uv} &
\mathbf r_{uv}
\end{bmatrix}^{\top},
\end{equation}
where $C_{uv}$ is the transportation cost, $U_{uv}$ is the corresponding flow bound, and $\mathbf r_{uv}$ is a three-dimensional one-hot encoding of the edge relation.

Continuous node and edge features are standardized using statistics computed from a nominal reference instance. Node normalization is performed separately for sources, pools, and terminals. Categorical one-hot features are not normalized.

The GNN first maps the node features to a 32-dimensional latent representation. Three heterogeneous edge-aware GATv2 layers then produce representations with dimensions 128, 64, and 32 using four attention heads. Separate message-passing operators are used for the three relation types, and their contributions are aggregated by summation. Residual connections are included at each layer.

The learned node embeddings are then processed by edge- and triplet-level decoders. For an edge $(u,v)$, the source and destination embeddings are concatenated with the edge features and mapped to the coefficients $\eta$, $\beta$, and $\sigma$. For each source--pool--terminal triplet $(i,l,j)$, the embeddings of the three nodes are concatenated and mapped
to the coefficient $\alpha$.

\begin{table}[t]
\centering
\caption{GNN architecture and optimization-aware pipeline for the pooling problem.}
\label{tab:gnn_architecture_pooling}
\small
\setlength{\tabcolsep}{7pt}
\renewcommand{\arraystretch}{1.08}

\begin{tabular}{llrr}
\toprule
\textbf{Component} &
\textbf{Operation} &
\textbf{Input} &
\textbf{Output} \\
\midrule

\multicolumn{4}{l}{\textbf{Graph representation}} \\
Node features
& Input
& --
& $4\times |V|$ \\
Edge features
& Input
& --
& $5\times |E|$ \\

\midrule
\multicolumn{4}{l}{\textbf{Input embedding}} \\
Node embedding
& Dense
& $4$
& $32$ \\

\midrule
\multicolumn{4}{l}{\textbf{GNN encoder}} \\
\multirow{3}{*}{Message passing}
& GATv2Conv 1
& $(32,5)$
& $128$ \\
& GATv2Conv 2
& $(128,5)$
& $64$ \\
& GATv2Conv 3
& $(64,5)$
& $32$ \\
Output embedding
& Dense
& $32$
& $32$ \\

\midrule
\multicolumn{4}{l}{\textbf{Edge-level decoders}} \\
Edge representation
& Concatenation
& $32+32+5$
& $69$ \\
\multirow{2}{*}{Coefficient decoder}
& Dense 1
& $69$
& $128$ \\
& Dense 2
& $128$
& $1$ \\
Decoded coefficients
& --
& --
& $\eta,\beta,\sigma$ \\

\midrule
\multicolumn{4}{l}{\textbf{Triplet-level decoder}} \\
Triplet representation
& Concatenation
& $32+32+32$
& $96$ \\
\multirow{2}{*}{Coefficient decoder}
& Dense 1
& $96$
& $128$ \\
& Dense 2
& $128$
& $1$ \\
Decoded coefficient
& --
& --
& $\alpha$ \\

\midrule
\multicolumn{4}{l}{\textbf{Optimization-aware output}} \\
Tunable LF model
& Optimization layer
& $\alpha,\beta,\sigma,\eta$
& $\widehat{\bm q}$ \\
\bottomrule
\end{tabular}
\end{table}

\subsection{Training Procedure}

\paragraph{Initialization.}
The GNN is first trained to produce coefficient values close to one by minimizing the MSE between the predicted coefficients and a constant target of one. AdamW is used with a learning rate of $10^{-2}$, and the initialization stage terminates when the loss falls below $5\times10^{-3}$.

\paragraph{Self-supervised training.}
Training proceeds in three stages. First, the nominal instance is used for 100 epochs with the HF objective obtained from the conservative slack-augmented pooling model. AdamW is used with a learning rate of $10^{-3}$ and gradient clipping with norm 1.0. The resulting model is then trained for another 100 epochs using the aggressive HF evaluation model. The optimizer settings remain unchanged.

Finally, the model is fine-tuned using the nominal instance and 24 additional samples from the target distribution. Mini-batches contain five instances, and the epoch loss is the average HF evaluation loss across the mini-batches. Training continues subject to the computational budget reported below.

\paragraph{Semi-supervised training.}
For each training instance, the full-space QCP is solved with a
500-second time limit to generate a high-quality target value of
$\bm q$. Following initialization, the nominal instance is used to train
the GNN for 200 epochs by minimizing
\begin{equation}
\mathcal L_{\mathrm{semi}}
=
\operatorname{MSE}
\left(
\widehat{\bm q},
\bm q^{\mathrm{HF}}
\right).
\end{equation}

The model is then fine-tuned using the same nominal instance and 24 additional samples used by the self-supervised approach. Mini-batches contain five instances, and the epoch loss is the average MSE across the mini-batches. AdamW is used with a learning rate of $10^{-3}$ and gradient clipping with norm 1.0.

\subsection{Training Time}

The pooling experiments are substantially more computationally demanding than the gas experiments because both the LF and HF optimization models increase rapidly in size. For each benchmark instance, the self-supervised and semi-supervised approaches are assigned the same total training-time budget.

\begin{table}[H]
\centering
\TABLE{Training time summary for the pooling experiments. Times are reported in minutes.
\label{tab:training_times_summary_pooling}}
{\begin{tabular}{l|c|c|cc|cc|c}
\hline
\multirow{2}{*}{\textbf{Case}} &
\multirow{2}{*}{\textbf{Method}} &
\multirow{2}{*}{\textbf{Forward Passes}} &
\multicolumn{2}{c|}{\textbf{LF}} &
\multicolumn{2}{c|}{\textbf{HF}} &
\multirow{2}{*}{\textbf{Time Limit}} \\
\cline{4-5}
\cline{6-7}
& & & Forward & Backward & Forward & Backward & \\
\hline

\multirow{2}{*}{1}
& Self & 450 & 4.9 & 3.5 & 3.3 & 6.9 & 20 \\
& Semi & 925 & 9.2 & 6.6 & -- & -- & 20 \\
\hline

\multirow{2}{*}{2}
& Self & 900 & 14.1 & 9.2 & 9.3 & 23.2 & 60 \\
& Semi & 2050 & 28.7 & 19.9 & -- & -- & 60 \\
\hline

\multirow{2}{*}{3}
& Self & 525 & 11.2 & 10.4 & 7.9 & 29.9 & 60 \\
& Semi & 1300 & 28.2 & 22.3 & -- & -- & 60 \\
\hline

\multirow{2}{*}{4}
& Self & 725 & 53.4 & 40.2 & 43.8 & 159.3 & 300 \\
& Semi & 2050 & 145.1 & 116.1 & -- & -- & 300 \\
\hline

\multirow{2}{*}{5}
& Self & 900 & 97.6 & 81.7 & 74.1 & 337.3 & 600 \\
& Semi & 2550 & 270.2 & 279.2 & -- & -- & 600 \\
\hline
\end{tabular}
}{}
\end{table}

\input{formulation}

\end{APPENDICES}

\end{document}

%% file: Formulation.tex
% \documentclass{article}
% \usepackage[hidelinks]{hyperref}
% \usepackage{amsmath}
% \usepackage{graphicx}
% \usepackage{biblatex}
% \addbibresource{dronaref.bib}
% \newcommand{\mc}[1]{\mathcal{#1}}
% \usepackage{booktabs}
% \usepackage{subcaption}
% \usepackage{algorithm}
% \usepackage{algpseudocode}
% \usepackage[margin=1in]{geometry}
% \usepackage{setspace}
% \usepackage{float}
% \usepackage{xcolor}
% \usepackage{bm}

\title{Formulations}

% \begin{document}

\section{Mathematical Formulation of the Optimal Power Flow Problem}

\subsection{Notation}

\subsubsection{Indices}

\begin{longtable}{@{}p{0.34\linewidth}p{0.60\linewidth}@{}}
\toprule
\textbf{Index} & \textbf{Description} \\
\midrule
$i,j$ & Buses \\
$g$ & Generators \\
\bottomrule
\end{longtable}

\subsubsection{Sets}

\begin{longtable}{@{}p{0.34\linewidth}p{0.60\linewidth}@{}}
\toprule
\textbf{Set} & \textbf{Description} \\
\midrule
$B$ & Set of buses \\
$G$ & Set of generators \\
$E$ & Set of branches represented by bus pairs $(i,j)$ \\
$R$ & Set of reference buses \\
$G_i$ & Set of generators connected to bus $i$ \\
$E_i$ & Set of branches connected to bus $i$ \\
\bottomrule
\end{longtable}

\subsubsection{Parameters}

\begin{longtable}{@{}p{0.34\linewidth}p{0.60\linewidth}@{}}
\toprule
\textbf{Parameter} & \textbf{Description} \\
\midrule
\endfirsthead

\toprule
\textbf{Parameter} & \textbf{Description} \\
\midrule
\endhead

$P_i^D$ & Active-power demand at bus $i$ \\
$Q_i^D$ & Reactive-power demand at bus $i$ \\
$V_i^L$ & Lower voltage-magnitude bound at bus $i$ \\
$V_i^U$ & Upper voltage-magnitude bound at bus $i$ \\
$P_g^L$ & Lower active-power bound of generator $g$ \\
$P_g^U$ & Upper active-power bound of generator $g$ \\
$Q_g^L$ & Lower reactive-power bound of generator $g$ \\
$Q_g^U$ & Upper reactive-power bound of generator $g$ \\
$C_g^{(2)}, C_g^{(1)}, C_g^{(0)}$
& Quadratic, linear, and constant cost coefficients of generator $g$ \\
$C^{\max,1}:=\max_{g\in G}C_g^{(1)}$
& Largest linear generator-cost coefficient \\
$G_{ij}$ & Series conductance of branch $(i,j)$ \\
$B_{ij}$ & Series susceptance of branch $(i,j)$ \\
$B_{ij}^{\mathrm{sh}}$ & Shunt susceptance of transmission line between buses $i$ and $j$ \\
$B_i^{\mathrm{sh}}$ & Shunt susceptance at bus $i$ \\
$G_i^{\mathrm{sh}}$ & Shunt conductance at bus $i$ \\
$T_{ij}$ & Transformer tap ratio of branch $(i,j)$; $T_{ij}=1$ for a branch without a transformer \\
$S_{ij}^U$ & Apparent-power rating of branch $(i,j)$ \\
$\Delta_{ij}^L$ & Lower voltage-angle-difference bound of branch $(i,j)$ \\
$\Delta_{ij}^U$ & Upper voltage-angle-difference bound of branch $(i,j)$ \\
$\alpha_i^{\mathrm{GNN}}$ & GNN coefficient for bus $i$ \\
$\widehat{P}^{\mathrm{LF}}_g$
& Active-power dispatch obtained from the low-fidelity model and passed to the final high-fidelity layer \\
\bottomrule
\end{longtable}

\subsubsection{Variables}

\begin{longtable}{@{}p{0.34\linewidth}p{0.60\linewidth}@{}}
\toprule
\textbf{Variable} & \textbf{Description} \\
\midrule
$p_g$ & Active-power output of generator $g$ \\
$q_g$ & Reactive-power output of generator $g$ \\
$v_i^r$ & Real component of the complex voltage at bus $i$ \\
$v_i^i$ & Imaginary component of the complex voltage at bus $i$ \\
$p_{ij}$ & Active-power flow from bus $i$ to bus $j$ \\
$q_{ij}$ & Reactive-power flow from bus $i$ to bus $j$ \\
$\theta_i$ & Voltage angle at bus $i$ \\
$z_i$ & Nonnegative active-power-balance slack of the low-fidelity model \\
$s_g^+$ & Positive deviation from the low-fidelity active-power dispatch \\
$s_g^-$ & Negative deviation from the low-fidelity active-power dispatch \\
\bottomrule
\end{longtable}

\subsubsection{Rectangular-voltage products}

To write the AC equations compactly, define

\begin{equation}
c_{ii}:=(v_i^r)^2+(v_i^i)^2
\quad \forall i\in B,
\label{eq:voltage-product-diagonal}
\end{equation}

\begin{equation}
c_{ij}:=v_i^r v_j^r+v_i^i v_j^i
\quad \forall (i,j)\in E,
\label{eq:voltage-product-real}
\end{equation}

\begin{equation}
s_{ij}:=v_i^r v_j^i-v_j^r v_i^i
\quad \forall (i,j)\in E.
\label{eq:voltage-product-imaginary}
\end{equation}

These quantities are auxiliary expressions of the rectangular voltages, not
independent optimization variables.

The optimization-variable sets used by the three models are

\begin{equation}
\begin{aligned}
\Xi^{\mathrm{AC}}
:=\bigl\{
&p_g,q_g,
v_i^r,v_i^i,
p_{ij},q_{ij},p_{ji},q_{ji}
% :\\
% &g\in G,\ i\in B,\ (i,j)\in E
% \bigr\},
\}
\end{aligned}
\label{eq:ac-variable-set}
\end{equation}

\begin{equation}
\Xi^{\mathrm{DC}}
:=\left\{
p_g,\theta_i,p_{ij},z_i
% :
% g\in G,\ i\in B,\ (i,j)\in E
\right\}
\label{eq:dc-variable-set}
\end{equation}

% and

\begin{equation}
\Xi^{\mathrm{AC-S}}
:=\Xi^{\mathrm{AC}}
\cup\left\{s_g^+,s_g^-\right\}.
%:g\in G
\label{eq:ac-slack-variable-set}
\end{equation}

% The high-fidelity formulations therefore optimize $v_i^r$ and $v_i^i$
% directly; the three voltage-product expressions above are evaluated from those
% variables.

\subsection{Formulation}

\subsubsection{High-fidelity model}

The high-fidelity model is the rectangular AC optimal power flow implemented by
% \texttt{ACPM} through \texttt{PowerModels.ACRPowerModel} and
% \texttt{PowerModels.build\_opf}.  For the quadratic generation costs used in
% this project, it is

\begin{equation}
\min_{\Xi^{\mathrm{AC}}}
\quad
\sum_{g\in G}
\left(C_g^{(2)}p_g^2+C_g^{(1)}p_g+C_g^{(0)}\right)
\label{eq:acpm-objective}
\end{equation}

s.t.\\

\begin{equation}
\sum_{g\in G_i}p_g-P_i^D
-G_i^{sh} c_{ii}
=
\sum_{(i,j)\in E_i}p_{ij}
+\sum_{(j,i)\in E_i}p_{ji}
\quad \forall i\in B
\label{eq:acpm-active-balance}
\end{equation}

\begin{equation}
\sum_{g\in G_i}q_g-Q_i^D
+B_i^{sh} c_{ii}
=
\sum_{(i,j)\in E_i}q_{ij}
+\sum_{(j,i)\in E_i}q_{ji}
\quad \forall i\in B
\label{eq:acpm-reactive-balance}
\end{equation}

The AC branch flows are

\begin{equation}
p_{ij}
=\frac{G_{ij}}{T_{ij}^2}c_{ii}
-\frac{G_{ij}}{T_{ij}}c_{ij}
+\frac{B_{ij}}{T_{ij}}s_{ij}
\quad \forall (i,j)\in E
\label{eq:acpm-ohms-from-p}
\end{equation}

\begin{equation}
p_{ji}
=G_{ij}c_{jj}
-\frac{G_{ij}}{T_{ij}}c_{ij}
-\frac{B_{ij}}{T_{ij}}s_{ij}
\quad \forall (i,j)\in E
\label{eq:acpm-ohms-to-p}
\end{equation}

\begin{equation}
q_{ij}
=-\frac{B_{ij}+B_{ij}^{\mathrm{sh}}}{T_{ij}^2}c_{ii}
+\frac{G_{ij}}{T_{ij}}s_{ij}
+\frac{B_{ij}}{T_{ij}}c_{ij}
\quad \forall (i,j)\in E
\label{eq:acpm-ohms-from-q}
\end{equation}

\begin{equation}
q_{ji}
=-(B_{ij}+B_{ij}^{\mathrm{sh}})c_{jj}
-\frac{G_{ij}}{T_{ij}}s_{ij}
+\frac{B_{ij}}{T_{ij}}c_{ij}
\quad \forall (i,j)\in E
\label{eq:acpm-ohms-to-q}
\end{equation}

\begin{equation}
-\tan(\Delta_{ij}^U)c_{ij}
\le s_{ij}\le
-\tan(\Delta_{ij}^L)c_{ij}
\quad \forall (i,j)\in E
\label{eq:acpm-angle-difference}
\end{equation}

\begin{equation}
p_{ij}^2+q_{ij}^2\le(S_{ij}^U)^2
\quad \forall (i,j),(j,i)\in E
\label{eq:acpm-thermal-limits}
\end{equation}

\begin{equation}
(V_i^L)^2
\le c_{ii}
\le(V_i^U)^2
\quad \forall i\in B
\label{eq:acpm-voltage-magnitude-bounds}
\end{equation}

\begin{equation}
-V_i^U\le v_i^r\le V_i^U,
\qquad
-V_i^U\le v_i^i\le V_i^U
\quad \forall i\in B
\label{eq:acpm-voltage-component-bounds}
\end{equation}

\begin{equation}
v_i^i=0
\quad \forall i\in R
\label{eq:acpm-reference}
\end{equation}

\begin{equation}
P_g^L\le p_g\le P_g^U,
\qquad
Q_g^L\le q_g\le Q_g^U
\quad \forall g\in G.
\label{eq:acpm-generator-bounds}
\end{equation}

\subsubsection{Low-fidelity model}

The low-fidelity model is the learned DC optimal power flow implemented by
% \texttt{DCPMLayer}.  The GNN supplies $K_i^{\mathrm{GNN}}$, which modifies the
% active demand appearing in the bus balance.  Reactive power, voltage
% magnitudes, AC losses, transformer taps, and phase shifts are omitted.

\begin{equation}
\min_{\Xi^{\mathrm{DC}}}
\quad
\sum_{g\in G}
\left(C_g^{(2)}p_g^2+C_g^{(1)}p_g+C_g^{(0)}\right)
+10C^{\max,1}\sum_{i\in B}z_i
\label{eq:dc-layer-objective}
\end{equation}

s.t.\\

\begin{equation}
\sum_{(i,j)\in E_i}p_{ij}
-\sum_{(j,i)\in E_i}p_{ji}
=
\sum_{g\in G_i}p_g
-P_i^D \bm{\alpha_i^{\mathrm{GNN}}}-G_i^{sh}+z_i
\quad \forall i\in B
\label{eq:dc-layer-balance}
\end{equation}

\begin{equation}
p_{ij}
=B_{ij}(\theta_i-\theta_j)
\quad \forall (i,j)\in E
\label{eq:dc-layer-flow}
\end{equation}

\begin{equation}
\Delta_{ij}^L
\le\theta_i-\theta_j
\le\Delta_{ij}^U
\quad \forall (i,j)\in E
\label{eq:dc-layer-angle-difference}
\end{equation}

\begin{equation}
-S_{ij}^U\le p_{ij}\le S_{ij}^U
\quad \forall (i,j)\in E
\label{eq:dc-layer-thermal-limit}
\end{equation}

\begin{equation}
\theta_i=0
\quad \forall i\in R
\label{eq:dc-layer-reference}
\end{equation}

\begin{equation}
P_g^L\le p_g\le P_g^U
\quad \forall g\in G
\label{eq:dc-layer-generator-bounds}
\end{equation}

\begin{equation}
z_i\ge0
\quad \forall i\in B.
\label{eq:dc-layer-slack}
\end{equation}

Let $p_g^*$ denote the generator component of an optimal low-fidelity
solution.  The parameter passed to the final high-fidelity layer is

\begin{equation}
\bm{\widehat{P}^{\mathrm{LF}}}_g:=p_g^*
\quad \forall g\in G.
\label{eq:dc-layer-solution}
\end{equation}

\subsubsection{High-fidelity model with slacks and the low-fidelity solution}

% The final layer, implemented by \texttt{ACPMLayer}, restores the complete AC
% physics and permits active generation to move away from the low-fidelity
% dispatch through positive and negative slacks.

\begin{equation}
\begin{aligned}
\min_{\Xi^{\mathrm{AC-S}}}
\quad
&\sum_{g\in G}
\left(C_g^{(2)}p_g^2+C_g^{(1)}p_g+C_g^{(0)}\right)\\
&+\sum_{g\in G}
\left[
C_g^{(2)}(s_g^++s_g^-)^2
+C_g^{(1)}(s_g^++s_g^-)
\right]
\end{aligned}
\label{eq:ac-layer-objective}
\end{equation}

s.t.\\

\begin{equation}
p_g=\bm{\widehat{P}^{\mathrm{LF}}}_g+s_g^+-s_g^-
\quad \forall g\in G
\label{eq:ac-layer-link}
\end{equation}

\begin{equation}
s_g^+\ge0,
\qquad
s_g^-\ge0
\quad \forall g\in G,
\label{eq:ac-layer-slacks}
\end{equation}

\begin{equation}
c_{ii}=(v_i^r)^2+(v_i^i)^2
\quad \forall i\in B
\label{eq:ac-layer-voltage-product-diagonal}
\end{equation}

\begin{equation}
c_{ij}=v_i^r v_j^r+v_i^i v_j^i
\quad \forall (i,j)\in E
\label{eq:ac-layer-voltage-product-real}
\end{equation}

\begin{equation}
s_{ij}=v_i^r v_j^i-v_j^r v_i^i
\quad \forall (i,j)\in E
\label{eq:ac-layer-voltage-product-imaginary}
\end{equation}

\begin{equation}
\sum_{g\in G_i}p_g-P_i^D-G_i^{sh} c_{ii}
=
\sum_{(i,j)\in E_i}p_{ij}
+\sum_{(j,i)\in E_i}p_{ji}
\quad \forall i\in B
\label{eq:ac-layer-active-balance}
\end{equation}

\begin{equation}
\sum_{g\in G_i}q_g-Q_i^D+B_i^{sh} c_{ii}
=
\sum_{(i,j)\in E_i}q_{ij}
+\sum_{(j,i)\in E_i}q_{ji}
\quad \forall i\in B
\label{eq:ac-layer-reactive-balance}
\end{equation}

\begin{equation}
p_{ij}
=\frac{G_{ij}}{T_{ij}^2}c_{ii}
-\frac{G_{ij}}{T_{ij}}c_{ij}
+\frac{B_{ij}}{T_{ij}}s_{ij}
\quad \forall (i,j)\in E
\label{eq:ac-layer-ohms-from-p}
\end{equation}

\begin{equation}
p_{ji}
=G_{ij}c_{jj}
-\frac{G_{ij}}{T_{ij}}c_{ij}
-\frac{B_{ij}}{T_{ij}}s_{ij}
\quad \forall (i,j)\in E
\label{eq:ac-layer-ohms-to-p}
\end{equation}

\begin{equation}
q_{ij}
=-\frac{B_{ij}+B_{ij}^{\mathrm{sh}}}{T_{ij}^2}c_{ii}
+\frac{G_{ij}}{T_{ij}}s_{ij}
+\frac{B_{ij}}{T_{ij}}c_{ij}
\quad \forall (i,j)\in E
\label{eq:ac-layer-ohms-from-q}
\end{equation}

\begin{equation}
q_{ji}
=-(B_{ij}+B_{ij}^{\mathrm{sh}})c_{jj}
-\frac{G_{ij}}{T_{ij}}s_{ij}
+\frac{B_{ij}}{T_{ij}}c_{ij}
\quad \forall (i,j)\in E
\label{eq:ac-layer-ohms-to-q}
\end{equation}

\begin{equation}
-\tan(\Delta_{ij}^U)c_{ij}
\le s_{ij}\le
-\tan(\Delta_{ij}^L)c_{ij}
\quad \forall (i,j)\in E
\label{eq:ac-layer-angle-difference}
\end{equation}

\begin{equation}
p_{ij}^2+q_{ij}^2\le(S_{ij}^U)^2
\quad \forall (i,j),(j,i)\in E
\label{eq:ac-layer-thermal-limits}
\end{equation}

\begin{equation}
(V_i^L)^2\le c_{ii}\le(V_i^U)^2
\quad \forall i\in B
\label{eq:ac-layer-voltage-magnitude-bounds}
\end{equation}

\begin{equation}
-V_i^U\le v_i^r\le V_i^U,
\qquad
-V_i^U\le v_i^i\le V_i^U
\quad \forall i\in B
\label{eq:ac-layer-voltage-component-bounds}
\end{equation}

\begin{equation}
v_i^i=0
\quad \forall i\in R
\label{eq:ac-layer-reference}
\end{equation}

\begin{equation}
P_g^L\le p_g\le P_g^U,
\qquad
Q_g^L\le q_g\le Q_g^U
\quad \forall g\in G.
\label{eq:ac-layer-generator-bounds}
\end{equation}

\section{Mathematical Formulation of Gas Flow Problem}

\subsection{Notation}

\subsubsection{Indices}

\begin{longtable}{@{}p{0.34\linewidth}p{0.60\linewidth}@{}}
\toprule
\textbf{Index} & \textbf{Description} \\
\midrule
$i,j$ & Junctions \\
$k$ & Pipes \\
$c$ & Compressors \\
$r$ & Receipts (gas supplies) \\
$d$ & Deliveries (gas withdrawals) \\
$v$ & Vertices in the polyhedral pipe relaxation \\
\bottomrule
\end{longtable}

\subsubsection{Sets}

\begin{longtable}{@{}p{0.34\linewidth}p{0.60\linewidth}@{}}
\toprule
\textbf{Set} & \textbf{Description} \\
\midrule
$J$ & Set of junctions \\
$P$ & Set of pipes \\
$C$ & Set of compressors \\
$R$ & Set of dispatchable receipts \\
$D$ & Set of nondispatchable deliveries \\
$R_i$ & Set of receipts connected to junction $i$ \\
$D_i$ & Set of deliveries connected to junction $i$ \\
$P_i^{+}$ & Set of pipes whose stored orientation leaves junction $i$ \\
$P_i^{-}$ & Set of pipes whose stored orientation enters junction $i$ \\
$C_i^{+}$ & Set of compressors whose stored orientation leaves junction $i$ \\
$C_i^{-}$ & Set of compressors whose stored orientation enters junction $i$ \\
$V_k$ & Set of vertices in the polyhedral relaxation for pipe $k$ \\
\bottomrule
\end{longtable}

\subsubsection{Parameters}

\begin{longtable}{@{}p{0.34\linewidth}p{0.60\linewidth}@{}}
\toprule
\textbf{Parameter} & \textbf{Description} \\
\midrule
\endfirsthead

\toprule
\textbf{Parameter} & \textbf{Description} \\
\midrule
\endhead

$F_k$ & From-junction of pipe $k$ \\
$T_k$ & To-junction of pipe $k$ \\
$F_c$ & From-junction of compressor $c$ \\
$T_c$ & To-junction of compressor $c$ \\
$P_i^L$ & Lower pressure bound at junction $i$ \\
$P_i^U$ & Upper pressure bound at junction $i$ \\
$\Pi_i^L=(P_i^L)^2$ & Lower squared-pressure bound at junction $i$ \\
$\Pi_i^U=(P_i^U)^2$ & Upper squared-pressure bound at junction $i$ \\
$Q_k^L$ & Lower mass-flow bound of pipe $k$ \\
$Q_k^U$ & Upper mass-flow bound of pipe $k$ \\
$H_c^L$ & Lower mass-flow bound of compressor $c$ \\
$H_c^U$ & Upper mass-flow bound of compressor $c$ \\
$\Delta\Pi_k^L$ & Lower squared-pressure-drop bound of pipe $k$ \\
$\Delta\Pi_k^U$ & Upper squared-pressure-drop bound of pipe $k$ \\
$W_k$ & Weymouth coefficient of pipe $k$ \\
$R_c^L$ & Minimum pressure ratio of compressor $c$ \\
$R_c^U$ & Maximum pressure ratio of compressor $c$ \\
$G_r^L$ & Minimum injection of receipt $r$ \\
$G_r^U$ & Maximum injection of receipt $r$ \\
$L_d$ & Fixed withdrawal of delivery $d$ for the selected sample \\
$K_r$ & Offer price of receipt $r$ \\
$(X_{kv},Y_{kv})$ & Vertex $v$ of the relaxation of $y=x|x|$ for pipe $k$ \\
$\alpha_k^{\mathrm{GNN}}$ & GNN coefficient for pipe $k$ \\
$\beta_i^{\mathrm{GNN}}$ & GNN coefficient for junction $i$ \\
$\widehat{G}^{\mathrm{LF}}_r$ & Receipt injection obtained from the low-fidelity model and used as a parameter in the final high-fidelity model \\
\bottomrule
\end{longtable}

\subsubsection{Variables}

\begin{longtable}{@{}p{0.34\linewidth}p{0.60\linewidth}@{}}
\toprule
\textbf{Variable} & \textbf{Description} \\
\midrule
\endfirsthead

\toprule
\textbf{Variable} & \textbf{Description} \\
\midrule
\endhead

$g_r$ & Injection at receipt $r$ \\
$f_k$ & Signed mass flow through pipe $k$ \\
$h_c$ & Signed mass flow through compressor $c$ \\
$\pi_i$ & Squared pressure at junction $i$ \\
$\rho_c$ & Squared pressure-ratio variable of compressor $c$ \\
$q_k$ & Low-fidelity auxiliary for $f_k|f_k|$ \\
$\lambda_{kv}$ & Convex-combination weight for pipe-relaxation vertex $v$ \\
$\delta_c$ & Compressor pressure-difference auxiliary \\
$u_c$ & Low-fidelity auxiliary for $h_c\delta_c$ \\
$s_r^+$ & Positive deviation from the low-fidelity receipt solution \\
$s_r^-$ & Negative deviation from the low-fidelity receipt solution \\
\bottomrule
\end{longtable}

\subsubsection{McCormick relaxation}

For variables $x\in[X^L,X^U]$, $y\in[Y^L,Y^U]$, and product surrogate $z$, the notation
$\operatorname{MC}(x,y,z)$ represents the following four inequalities:

\begin{equation}
z \ge X^L y + Y^L x - X^L Y^L
\end{equation}

\begin{equation}
z \ge X^U y + Y^U x - X^U Y^U
\end{equation}

\begin{equation}
z \le X^L y + Y^U x - X^L Y^U
\end{equation}

\begin{equation}
z \le X^U y + Y^L x - X^U Y^L
\end{equation}

\subsection{Formulation}

\subsubsection{High-fidelity model}

% The high-fidelity model is the Weymouth-pressure model instantiated by
\texttt{WPGasModel} and \texttt{GasModels.build\_ogf\_comp\_power\_unc}.

\begin{equation}
\min_{g,f,h,\pi,\rho}
\left\{
\sum_{r\in R} K_r g_r
\right\}
\label{eq:hf-objective}
\end{equation}

s.t.\\

\begin{equation}
\sum_{r\in R_i}g_r
-\sum_{d\in D_i}L_d
=
\sum_{k\in P_i^+}f_k-
\sum_{k\in P_i^-}f_k+
\sum_{c\in C_i^+}h_c-
\sum_{c\in C_i^-}h_c
\quad \forall i\in J
\label{eq:hf-balance}
\end{equation}

\begin{equation}
G_r^L \le g_r \le G_r^U
\quad \forall r\in R
\label{eq:hf-receipt-bounds}
\end{equation}

\begin{equation}
\Pi_i^L \le \pi_i \le \Pi_i^U
\quad \forall i\in J
\label{eq:hf-pressure-bounds}
\end{equation}

\begin{equation}
Q_k^L \le f_k \le Q_k^U
\quad \forall k\in P
\label{eq:hf-pipe-flow-bounds}
\end{equation}

\begin{equation}
\Delta\Pi_k^L
\le \pi_{F_k}-\pi_{T_k}
\le \Delta\Pi_k^U
\quad \forall k\in P
\label{eq:hf-pressure-drop}
\end{equation}

\begin{equation}
W_k\left(\pi_{F_k}-\pi_{T_k}\right)
= f_k|f_k|
\quad \forall k\in P
\label{eq:hf-weymouth}
\end{equation}

\begin{equation}
H_c^L \le h_c \le H_c^U
\quad \forall c\in C
\label{eq:hf-compressor-flow-bounds}
\end{equation}

\begin{equation}
(R_c^L)^2\pi_{F_c}
\le \pi_{T_c}
\le (R_c^U)^2\pi_{F_c}
\quad \forall c\in C
\label{eq:hf-compressor-pressure}
\end{equation}

\begin{equation}
h_c\left(\pi_{F_c}-\pi_{T_c}\right) \le 0
\quad \forall c\in C
\label{eq:hf-compressor-direction}
\end{equation}

\begin{equation}
\pi_{T_c}=\rho_c\pi_{F_c}
\quad \forall c\in C
\label{eq:hf-compressor-product}
\end{equation}

\begin{equation}
(R_c^L)^2 \le \rho_c \le (R_c^U)^2
\quad \forall c\in C
\label{eq:hf-ratio-bounds}
\end{equation}

\subsubsection{Low-fidelity model}

The low-fidelity model uses the same network balances and bounds, replaces the nonlinear pipe and
compressor products by relaxations, and adds the GNN-generated coefficients $A_k$ and $B_i$ to the
objective.

\begin{equation}
\begin{aligned}
\min_{g,f,h,\pi,\rho,q,\lambda,\delta,u}
\left\{
\sum_{r\in R} K_r g_r
+\sum_{k\in P} \bm{\alpha_k^{\mathrm{GNN}}} f_k^2
+\sum_{i\in J} \bm{\beta_i^{\mathrm{GNN}}}\pi_i^2
\right\}
\end{aligned}
\label{eq:lf-objective}
\end{equation}

s.t.\\

\begin{equation}
\sum_{r\in R_i}g_r
-\sum_{d\in D_i}L_d
=
\sum_{k\in P_i^+}f_k-
\sum_{k\in P_i^-}f_k+
\sum_{c\in C_i^+}h_c-
\sum_{c\in C_i^-}h_c
\quad \forall i\in J
\label{eq:lf-balance}
\end{equation}

\begin{equation}
G_r^L \le g_r \le G_r^U
\quad \forall r\in R
\end{equation}

\begin{equation}
\Pi_i^L \le \pi_i \le \Pi_i^U
\quad \forall i\in J
\end{equation}

\begin{equation}
Q_k^L \le f_k \le Q_k^U
\quad \forall k\in P
\end{equation}

\begin{equation}
\Delta\Pi_k^L
\le \pi_{F_k}-\pi_{T_k}
\le \Delta\Pi_k^U
\quad \forall k\in P
\end{equation}

\begin{equation}
W_k\left(\pi_{F_k}-\pi_{T_k}\right)=q_k
\quad \forall k\in P
\label{eq:lf-relaxed-weymouth}
\end{equation}

\begin{equation}
\sum_{v\in V_k}\lambda_{kv}=1
\quad \forall k\in P
\end{equation}

\begin{equation}
f_k=\sum_{v\in V_k}X_{kv}\lambda_{kv}
\quad \forall k\in P
\end{equation}

\begin{equation}
q_k=\sum_{v\in V_k}Y_{kv}\lambda_{kv}
\quad \forall k\in P
\end{equation}

\begin{equation}
0\le\lambda_{kv}\le1
\quad \forall k\in P,\;v\in V_k
\end{equation}

\begin{equation}
H_c^L \le h_c \le H_c^U
\quad \forall c\in C
\end{equation}

\begin{equation}
(R_c^L)^2\pi_{F_c}
\le \pi_{T_c}
\le (R_c^U)^2\pi_{F_c}
\quad \forall c\in C
\end{equation}

\begin{equation}
\delta_c=\pi_{F_c}-\pi_{T_c}
\quad \forall c\in C
\end{equation}

\begin{equation}
\Pi_{F_c}^L-\Pi_{T_c}^U
\le\delta_c\le
\Pi_{F_c}^U-\Pi_{T_c}^L
\quad \forall c\in C
\end{equation}

\begin{equation}
\operatorname{MC}(h_c,\delta_c,u_c),
\qquad u_c\le0
\quad \forall c\in C
\label{eq:lf-compressor-direction}
\end{equation}

\begin{equation}
\operatorname{MC}(\pi_{F_c},\rho_c,\pi_{T_c})
\quad \forall c\in C
\label{eq:lf-compressor-product}
\end{equation}

\begin{equation}
(R_c^L)^2 \le \rho_c \le (R_c^U)^2
\quad \forall c\in C
\end{equation}

Let $g_r^*$ denote the receipt component of an optimal low-fidelity solution.  The value passed to
the next layer is the parameter

\begin{equation}
\bm{\widehat{G}^{\mathrm{LF}}_r}:=g_r^*
\quad \forall r\in R.
\label{eq:lf-solution}
\end{equation}

\subsubsection{High-fidelity model with slacks and the low-fidelity solution: Conservative/Warm start}

The last optimization layer restores the exact high-fidelity physics and permits the receipt
injections to move away from the low-fidelity receipt solution through positive and negative
slacks.

\begin{equation}
\begin{aligned}
\min_{g,f,h,\pi,\rho,s^+,s^-}
\left\{
\sum_{r\in R}K_r g_r
+\sum_{r\in R}K_r\left(s_r^++s_r^-\right)^2
\right\}
\end{aligned}
\label{eq:hfs-objective}
\end{equation}

s.t.\\

\begin{equation}
g_r
=\bm{\widehat{G}^{\mathrm{LF}}_r}
+s_r^+-s_r^-
\quad \forall r\in R
\label{eq:hfs-link}
\end{equation}

\begin{equation}
s_r^+\ge0,
\qquad
s_r^-\ge0
\quad \forall r\in R
\label{eq:hfs-slacks}
\end{equation}

\begin{equation}
\sum_{r\in R_i}g_r
-\sum_{d\in D_i}L_d
=
\sum_{k\in P_i^+}f_k-
\sum_{k\in P_i^-}f_k+
\sum_{c\in C_i^+}h_c-
\sum_{c\in C_i^-}h_c
\quad \forall i\in J
\end{equation}

\begin{equation}
G_r^L \le g_r \le G_r^U
\quad \forall r\in R
\end{equation}

\begin{equation}
\Pi_i^L \le \pi_i \le \Pi_i^U
\quad \forall i\in J
\end{equation}

\begin{equation}
Q_k^L \le f_k \le Q_k^U
\quad \forall k\in P
\end{equation}

\begin{equation}
\Delta\Pi_k^L
\le \pi_{F_k}-\pi_{T_k}
\le \Delta\Pi_k^U
\quad \forall k\in P
\end{equation}

\begin{equation}
W_k\left(\pi_{F_k}-\pi_{T_k}\right)
= f_k|f_k|
\quad \forall k\in P
\end{equation}

\begin{equation}
H_c^L \le h_c \le H_c^U
\quad \forall c\in C
\end{equation}

\begin{equation}
(R_c^L)^2\pi_{F_c}
\le \pi_{T_c}
\le (R_c^U)^2\pi_{F_c}
\quad \forall c\in C
\end{equation}

\begin{equation}
h_c\left(\pi_{F_c}-\pi_{T_c}\right)\le0
\quad \forall c\in C
\end{equation}

\begin{equation}
\pi_{T_c}=\rho_c\pi_{F_c}
\quad \forall c\in C
\end{equation}

\begin{equation}
(R_c^L)^2 \le \rho_c \le (R_c^U)^2
\quad \forall c\in C
\end{equation}

\section{Mathematical Formulation of Pooling Problem}

\subsection{Notation}

\subsubsection{Indices}

\begin{longtable}{@{}p{0.34\linewidth}p{0.60\linewidth}@{}}
\toprule
\textbf{Index} & \textbf{Description} \\
\midrule
$i$ & Raw materials \\
$j$ & Products \\
$k$ & Qualities \\
$l$ & Pools \\
\bottomrule
\end{longtable}

\subsubsection{Sets}

\begin{longtable}{@{}p{0.34\linewidth}p{0.60\linewidth}@{}}
\toprule
\textbf{Set} & \textbf{Description} \\
\midrule
$I$ & Set of raw materials \\
$J$ & Set of products \\
$K$ & Set of qualities \\
$L$ & Set of pools \\
$T_X$ & Set of input-to-pool connections $(i,l)$ (pool feeds) \\
$T_Y$ & Set of pool-to-product connections $(l,j)$ \\
$T_Z$ & Set of direct input-to-product connections $(i,j)$ (direct feeds) \\
$T_V$ & Set of input--pool--product paths $(i,l,j)$ such that $(i,l)\in T_X$ and $(l,j)\in T_Y$ \\
\bottomrule
\end{longtable}

\subsubsection{Parameters}

\begin{longtable}{@{}p{0.34\linewidth}p{0.60\linewidth}@{}}
\toprule
\textbf{Parameter} & \textbf{Description} \\
\midrule
\endfirsthead

\toprule
\textbf{Parameter} & \textbf{Description} \\
\midrule
\endhead

$F_{ij}$ & Per-unit flow weight for direct arc $(i,j)$ \\
$F_{il}$ & Per-unit flow weight for input-to-pool arc $(i,l)$ \\
$F_{lj}$ & Per-unit flow weight for pool-to-product arc $(l,j)$ \\
$A_i^U$ & Upper bound on the availability of raw material $i$ \\
$P_{i,k}$ & Value of quality $k$ for raw material $i$ \\
$D_j^U$ & Upper bound on demand for product $j$ \\
$P^U_{j,k}$ & Upper bound on quality $k$ for product $j$ \\
$P^L_{j,k}$ & Lower bound on quality $k$ for product $j$ \\
$S_l$ & Capacity of pool $l$ \\
$S_{l,j}$ & Capacity of flow from pool $l$ to product $j$ \\
$S_{i,j}$ & Capacity of direct flow from raw material $i$ to product $j$ \\
$\alpha^{\mathrm{GNN}}_{ilj}$ & GNN coefficient for path-flow $x_{ilj}$, for $(i,l,j)\in T_V$ \\
$\beta^{\mathrm{GNN}}_{lj}$ & GNN coefficient for pool-to-product flow $y_{lj}$, for $(l,j)\in T_Y$ \\
$\sigma^{\mathrm{GNN}}_{ij}$ & GNN coefficient for direct-flow variable $z_{ij}$, for $(i,j)\in T_Z$ \\
$\eta^{\mathrm{GNN}}_{il}$ & GNN coefficient for pool-composition variable $q_{il}$, for $(i,l)\in T_X$ \\
\bottomrule
\end{longtable}

\subsubsection{Variables}

\begin{longtable}{@{}p{0.34\linewidth}p{0.60\linewidth}@{}}
\toprule
\textbf{Variable} & \textbf{Description} \\
\midrule
$y_{l,j}$ & Flow from pool $l$ to product $j$ \\
$z_{i,j}$ & Direct flow from raw material $i$ to product $j$ \\
$p_{l,k}$ & Level of quality $k$ in pool $l$ \\
$q_{i,l}$ & Proportion of flow from raw material $i$ to pool $l$ \\
$v_{i,l,j}$ & Flow from raw material $i$ to product $j$ through pool $l$ \\
\bottomrule
\end{longtable}

\subsection{Formulation}
\subsubsection{Full space model}

\begin{equation}
\min_{y,z,v,q}
\left\{
- \sum_{(i,j)\in T_Z} F_{ij} z_{ij}
- \sum_{(i,l,j)\in T_V} (F_{il} + F_{lj}) v_{ilj}
\right\}
\end{equation}

\text{s.t.}

\begin{equation}
v_{ilj} = q_{il} y_{lj} \quad \forall (i,l,j)\in T_V
\end{equation}

\begin{equation}
\sum_{i:\,(i,l)\in T_X} q_{il} = 1 \quad \forall l \in L
\end{equation}

\begin{equation}
\sum_{i:\,(i,j)\in T_Z} P_{i,k} z_{ij}
+ \sum_{(i,l):\,(i,l,j)\in T_V} P_{i,k} v_{ilj}
\le
P^U_{j,k}
\left(
\sum_{l:\,(l,j)\in T_Y} y_{lj}
+
\sum_{i:\,(i,j)\in T_Z} z_{ij}
\right)
\quad \forall j \in J,\; k \in K
\end{equation}

\begin{equation}
\sum_{i:\,(i,j)\in T_Z} P_{i,k} z_{ij}
+ \sum_{(i,l):\,(i,l,j)\in T_V} P_{i,k} v_{ilj}
\ge
P^L_{j,k}
\left(
\sum_{l:\,(l,j)\in T_Y} y_{lj}
+
\sum_{i:\,(i,j)\in T_Z} z_{ij}
\right)
\quad \forall j \in J,\; k \in K
\end{equation}

\begin{equation}
\sum_{j:\,(i,j)\in T_Z} z_{ij}
+
\sum_{(l,j):\,(i,l,j)\in T_V} v_{ilj}
\le A_i
\quad \forall i \in I
\end{equation}

\begin{equation}
\sum_{j:\,(l,j)\in T_Y} y_{lj}
\le S_l
\quad \forall l \in L
\end{equation}

\begin{equation}
\sum_{i:\,(i,j)\in T_Z} z_{ij}
+
\sum_{(i,l):\,(i,l,j)\in T_V} v_{ilj}
\le D_j
\quad \forall j \in J
\end{equation}

\begin{equation}
y_{lj} \le S_{l,j} \quad \forall (l,j)\in T_Y, 
\qquad
z_{ij} \le S_{i,j} \quad \forall (i,j)\in T_Z
\end{equation}

\begin{equation}
\sum_{i:\,(i,l,j)\in T_V} v_{ilj}
= y_{lj}
\quad \forall (l,j)\in T_Y
\end{equation}

\begin{equation}
\sum_{j:\,(i,l,j)\in T_V} v_{ilj}
\le S_l\, q_{il}
\quad \forall (i,l)\in T_X
\end{equation}

\begin{equation}
q_{il} \ge 0,\;
y_{lj} \ge 0,\;
z_{ij} \ge 0,\;
v_{ilj} \ge 0
\end{equation}
\subsubsection{Low fidelity model}
\begin{equation}
\begin{aligned}
\min_{z, v, q, y} \{
& \sum_{(i,l,j)\in ILJ} \left\{0.1\, v_{ilj}^2 - \bm{\alpha_{i,l,j}^{GNN}}(F_{il} + F_{ij})v_{ilj} \right\} \\
& + \sum_{(i,j)\in IJ} \left\{0.1\, z_{i,j}^2 - \bm{\beta_{i,j}^{GNN}} F_{ij} z_{i,j} \right\} \\
& + \sum_{(i,l) \in IL} \left\{0.1\, q_{i,l}^2 - \bm{\sigma_{i,l}^{GNN}} q_{i,l} \right\} \\
& + \sum_{(l,j) \in LJ} \left\{0.1\, y_{l,j}^2 - \bm{\eta_{l,j}^{GNN}} y_{l,j} \right\}
\}
\end{aligned}
\end{equation}
s.t.\\
\begin{equation}
c_{l,j} =
\begin{cases}
F_{l,j}, & \text{if } (l,j) \in T_Y\\
S_l, & \text{otherwise}
\end{cases}
\end{equation}
\begin{equation}
x_{i,l,j} \: \ge y_{l,j} + c_{l,j}\, q_{i,l} - c_{l,j} \quad \forall (i,l,j)\in T_v 
\end{equation}
\begin{equation}
x_{i,l,j} \: \leq y_{l,j} \quad \forall (i,l,j)\in T_v 
\end{equation}
\begin{equation}
x_{i,l,j} \: \le c_{l,j}\, q_{i,l} \quad \forall (i,l,j)\in T_v
\end{equation}
\begin{equation}
\sum_{(i,l) \in T_x} q_{il} = 1 \quad \forall l \in L
\end{equation}
\begin{equation}
\sum_{i:\,(i,j)\in T_Z} P_{i,k} z_{ij}
+ \sum_{(i,l):\,(i,l,j)\in T_V} P_{i,k} v_{ilj}
\le
P^U_{j,k}
\left(
\sum_{l:\,(l,j)\in T_Y} y_{lj}
+
\sum_{i:\,(i,j)\in T_Z} z_{ij}
\right)
\quad \forall j \in J,\; k \in K
\end{equation}

\begin{equation}
\sum_{i:\,(i,j)\in T_Z} P_{i,k} z_{ij}
+ \sum_{(i,l):\,(i,l,j)\in T_V} P_{i,k} v_{ilj}
\ge
P^L_{j,k}
\left(
\sum_{l:\,(l,j)\in T_Y} y_{lj}
+
\sum_{i:\,(i,j)\in T_Z} z_{ij}
\right)
\quad \forall j \in J,\; k \in K
\end{equation}

\begin{equation}
\sum_{j:\,(i,j)\in T_Z} z_{ij}
+
\sum_{(l,j):\,(i,l,j)\in T_V} v_{ilj}
\le A_i
\quad \forall i \in I
\end{equation}

\begin{equation}
\sum_{j:\,(l,j)\in T_Y} y_{lj}
\le S_l
\quad \forall l \in L
\end{equation}

\begin{equation}
\sum_{i:\,(i,j)\in T_Z} z_{ij}
+
\sum_{(i,l):\,(i,l,j)\in T_V} v_{ilj}
\le D_j
\quad \forall j \in J
\end{equation}

\begin{equation}
y_{lj} \le S_{l,j} \quad \forall (l,j)\in T_Y, 
\qquad
z_{ij} \le S_{i,j} \quad \forall (i,j)\in T_Z
\end{equation}

\begin{equation}
\sum_{i:\,(i,l,j)\in T_V} v_{ilj}
= y_{lj}
\quad \forall (l,j)\in T_Y
\end{equation}

\begin{equation}
\sum_{j:\,(i,l,j)\in T_V} v_{ilj}
\le S_l\, q_{il}
\quad \forall (i,l)\in T_X
\end{equation}

\begin{equation}
q_{il} \ge 0,\;
y_{lj} \ge 0,\;
z_{ij} \ge 0,\;
v_{ilj} \ge 0
\end{equation}
\subsubsection{High-fidelity model with slacks and q variable from low-fidelity model : Conservative/Warm start}
\begin{equation}
\begin{aligned}
\min_{y,z,v,s} \; - \sum_{(i,j)\in T_Z} F_{ij} \, z_{i,j}
- \sum_{(i,l,j)\in T_v} (F_{il} + F_{lj}) \, v_{i,l,j}
+ \sum_{(i,l,j)\in T_v} \max\big(F_{il} + F_{lj},\, 0\big)\,\big(s^+_{i,l,j} + s^-_{i,l,j}\big)
\end{aligned}
\end{equation}

s.t.\\
\begin{equation}
\begin{cases}
v_{i,l,j} = \bm{q^*_{i,l}}\, y_{l,j},  \quad s^{+}_{i,l,j} + s^{-}_{i,l,j} = 0
& \text{if } F_{il} + F_{lj} \le 0, \\[6pt]
v_{i,l,j} + s^{+}_{i,l,j} - s^{-}_{i,l,j} = \bm{q^*_{i,l}}\, y_{l,j} 
& \text{otherwise}
\end{cases}
\quad \forall (i,l,j) \in T_v
\end{equation}
\begin{equation}
\sum_{j:\,(i,l,j)\in T_v} v_{i,l,j} \le S_l\, \bm{q^*_{i,l}}
\quad \forall (i,l) \in T_X
\end{equation}
\begin{equation}
\begin{aligned}
s^{+}_{i,l,j} \:\ge 0 \quad \forall (i,l,j)\in T_v, \quad
s^{-}_{i,l,j} \:\ge 0 \quad \forall (i,l,j)\in T_v
\end{aligned}
\end{equation}
\begin{equation}
\sum_{i:\,(i,j)\in T_Z} P_{i,k} z_{ij}
+ \sum_{(i,l):\,(i,l,j)\in T_V} P_{i,k} v_{ilj}
\le
P^U_{j,k}
\left(
\sum_{l:\,(l,j)\in T_Y} y_{lj}
+
\sum_{i:\,(i,j)\in T_Z} z_{ij}
\right)
\quad \forall j \in J,\; k \in K
\end{equation}

\begin{equation}
\sum_{i:\,(i,j)\in T_Z} P_{i,k} z_{ij}
+ \sum_{(i,l):\,(i,l,j)\in T_V} P_{i,k} v_{ilj}
\ge
P^L_{j,k}
\left(
\sum_{l:\,(l,j)\in T_Y} y_{lj}
+
\sum_{i:\,(i,j)\in T_Z} z_{ij}
\right)
\quad \forall j \in J,\; k \in K
\end{equation}

\begin{equation}
\sum_{j:\,(i,j)\in T_Z} z_{ij}
+
\sum_{(l,j):\,(i,l,j)\in T_V} v_{ilj}
\le A_i
\quad \forall i \in I
\end{equation}

\begin{equation}
\sum_{j:\,(l,j)\in T_Y} y_{lj}
\le S_l
\quad \forall l \in L
\end{equation}

\begin{equation}
\sum_{i:\,(i,j)\in T_Z} z_{ij}
+
\sum_{(i,l):\,(i,l,j)\in T_V} v_{ilj}
\le D_j
\quad \forall j \in J
\end{equation}

\begin{equation}
y_{lj} \le S_{l,j} \quad \forall (l,j)\in T_Y, 
\qquad
z_{ij} \le S_{i,j} \quad \forall (i,j)\in T_Z
\end{equation}

\begin{equation}
\sum_{j:\,(i,l,j)\in T_V} v_{ilj}
\le S_l\, q_{il}
\quad \forall (i,l)\in T_X
\end{equation}

\begin{equation}
q_{il} \ge 0,\;
y_{lj} \ge 0,\;
z_{ij} \ge 0,\;
v_{ilj} \ge 0
\end{equation}

\subsubsection{High-fidelity model with slacks and q variable from low-fidelity model : Aggresive/final}
\begin{equation}
\begin{aligned}
\min_{y,z,v,s} \; - \sum_{(i,j)\in T_Z} F_{ij} \, z_{i,j}
- \sum_{(i,l,j)\in T_v} (F_{il} + F_{lj}) \, v_{i,l,j}
+\max_{(i,l,j)\in T_v} \big( F_{il} + F_{lj} \big)\sum_{(i,l,j)\in T_v} \,\big(s^+_{i,l,j} + s^-_{i,l,j}\big)
\end{aligned}
\end{equation}

s.t.\\
\begin{equation}
\begin{cases}
v_{i,l,j} = \bm{q^*_{i,l}}\, y_{l,j},  \quad s^{+}_{i,l,j} + s^{-}_{i,l,j} = 0
& \text{if } F_{il} + F_{lj} \le 0, \\[6pt]
v_{i,l,j} + s^{+}_{i,l,j} - s^{-}_{i,l,j} = \bm{q^*_{i,l}}\, y_{l,j} 
& \text{otherwise}
\end{cases}
\quad \forall (i,l,j) \in T_v
\end{equation}
\begin{equation}
\sum_{j:\,(i,l,j)\in T_v} v_{i,l,j} \le S_l\, \bm{q^*_{i,l}}
\quad \forall (i,l) \in T_X
\end{equation}
\begin{equation}
\begin{aligned}
s^{+}_{i,l,j} \:\ge 0 \quad \forall (i,l,j)\in T_v, \quad
s^{-}_{i,l,j} \:\ge 0 \quad \forall (i,l,j)\in T_v
\end{aligned}
\end{equation}
\begin{equation}
\sum_{i:\,(i,j)\in T_Z} P_{i,k} z_{ij}
+ \sum_{(i,l):\,(i,l,j)\in T_V} P_{i,k} v_{ilj}
\le
P^U_{j,k}
\left(
\sum_{l:\,(l,j)\in T_Y} y_{lj}
+
\sum_{i:\,(i,j)\in T_Z} z_{ij}
\right)
\quad \forall j \in J,\; k \in K
\end{equation}

\begin{equation}
\sum_{i:\,(i,j)\in T_Z} P_{i,k} z_{ij}
+ \sum_{(i,l):\,(i,l,j)\in T_V} P_{i,k} v_{ilj}
\ge
P^L_{j,k}
\left(
\sum_{l:\,(l,j)\in T_Y} y_{lj}
+
\sum_{i:\,(i,j)\in T_Z} z_{ij}
\right)
\quad \forall j \in J,\; k \in K
\end{equation}

\begin{equation}
\sum_{j:\,(i,j)\in T_Z} z_{ij}
+
\sum_{(l,j):\,(i,l,j)\in T_V} v_{ilj}
\le A_i
\quad \forall i \in I
\end{equation}

\begin{equation}
\sum_{j:\,(l,j)\in T_Y} y_{lj}
\le S_l
\quad \forall l \in L
\end{equation}

\begin{equation}
\sum_{i:\,(i,j)\in T_Z} z_{ij}
+
\sum_{(i,l):\,(i,l,j)\in T_V} v_{ilj}
\le D_j
\quad \forall j \in J
\end{equation}

\begin{equation}
y_{lj} \le S_{l,j} \quad \forall (l,j)\in T_Y, 
\qquad
z_{ij} \le S_{i,j} \quad \forall (i,j)\in T_Z
\end{equation}

\begin{equation}
\sum_{j:\,(i,l,j)\in T_V} v_{ilj}
\le S_l\, q_{il}
\quad \forall (i,l)\in T_X
\end{equation}

\begin{equation}
q_{il} \ge 0,\;
y_{lj} \ge 0,\;
z_{ij} \ge 0,\;
v_{ilj} \ge 0
\end{equation}

% \end{document}